\documentclass[leqno,twoside, 12pt]{amsart}
\usepackage{amssymb, latexsym, esint}
\usepackage{color}

\usepackage{amsthm}
\usepackage{amssymb,amsfonts}
\usepackage[all,arc]{xy}
\usepackage{enumerate}
\usepackage{mathrsfs}
\usepackage{amsmath}
\usepackage{verbatim}%comentarios
\usepackage{hyperref}

\def\R{\mathbb R}
\def\N{\mathbb N}
\def\ve{\varepsilon}
\def\G{\mathcal G}
\def\H{\mathcal H}
\def\P{\mathcal P}

\theoremstyle{plain}
\numberwithin{equation}{section} \numberwithin{figure}{section}

\newtheorem{theorem}{Theorem}[section]
\newtheorem{lemma}[theorem]{Lemma}

\newtheorem{definition}[theorem]{Definition}
  \usepackage{epsfig} %For pictures: screened artwork should be set up with an 85 or 100 line screen
\usepackage{graphicx}  \usepackage{epstopdf}
\theoremstyle{definition}
\newtheorem{remark}[theorem]{Remark}
\newtheorem{example}[theorem]{Example}

\numberwithin{equation}{section}

\begin{document}

%%%%%%%%%%%%%%%%%%%%% Publisher's Area please ignore %%%%%%%%%%%%%%%
%

%
%%%%%%%%%%%%%%%%%%%%%%%%%%%%%%%%%%%%%%%%%%%%%%%%%%%%%%%%%%%%%%%%%%%%

\title[Generalized polyharmonic operator in Orlicz-Sobolev spaces]{An eigenvalue problem for a generalized polyharmonic operator in Orlicz-Sobolev spaces without the $\Delta_2$-condition}

\author{Ignacio Ceresa Dussel, Juli\'an Fern\'andez Bonder and Pablo Ochoa}

\address[I.Ceresa Dussel]{Instituto de C\'alculo -- CONICET and
Departamento de Matem\'atica, FCEN -- Universidad de Buenos Aires. 
Ciudad Universitaria, Edificio $0+\infty$, C1428EGA, Av. Cantilo s/n
Buenos Aires, Argentina\\
{\tt iceresad@dm.uba.ar}}

\address[J.F. Bonder]{Instituto de C\'alculo -- CONICET and
Departamento de Matem\'atica, FCEN -- Universidad de Buenos Aires. 
Ciudad Universitaria, Edificio $0+\infty$, C1428EGA, Av. Cantilo s/n
Buenos Aires, Argentina\\
{\tt jfbonder@dm.uba.ar\\ 
https://mate.dm.uba.ar/\~{}jfbonder/index.html}}

\address[P.Ochoa]{Universidad Nacional de Cuyo-CONICET, Parque Gral. San Mart\'in\\
Mendoza, 5500, Argentina\\
{\tt pablo.ochoa@ingenieria.uncuyo.edu.ar}}

\subjclass[2020]{35G30,35J62, 35J35}

%35J62  	Quasilinear elliptic equations
%46E30  	Spaces of measurable functions ($L^p$-spaces, Orlicz spaces, Köthe function spaces, Lorentz spaces, rearrangement invariant spaces, ideal spaces, etc.)
%35D30      Weak solutions to PDEs
\keywords{ Quasilinear elliptic problems, Higher order elliptc PDEs,    Nonlinear eigenvalue problem}
\maketitle

\begin{abstract}
In this paper, we consider a generalized polyharmonic eigenvalue problem of the form  $A(u)= \lambda h(u)$ in a bounded smooth domain with Dirichlet boundary conditions in the setting of higher-order Orlicz-Sobolev spaces. Here, $A$ is a very general operator depending on $u$ and arbitrary higher-order derivatives of $u$, whose growth is governed by an Orlicz function, and $h$ is a lower order term.  Combining the theories of pseudomonotone operators with complementary systems, we prove that this eigenvalue problem has an infinite number of eigenfunctions and that the corresponding sequence of eigenvalues tends to infinite. We point out that the $\Delta_2$-condition is not assumed for the involved Orlicz functions. 
\end{abstract}

\section{Introduction}

%Let $\Omega \subset \R^n$ be an open domain with the segment property. In this work, we study the following generalized nonlinear eigenvalue problem consisting in finding critical values to the following Rayleigh-type quotient
%\begin{equation}\label{general eigenvalue pro}
%\frac{\displaystyle\int_\Omega G(x, u, \nabla u,\dots, \nabla^k u)\, dx}{\displaystyle \int_\Omega H(x, u)\, dx},
%\end{equation}
%where $G$ and $H$ are functions defined in the setting of Orlicz-Sobolev spaces without assuming the $\Delta_2$-condition, and $G$ is strictly related with the Gossez functional \cite{G}.

Eigenvalue problems form a large and well-studied family of problems in the area of partial differential equations,  beginning in the nineteenth century  with the classical  formulation for the Dirichlet Laplacian,
\begin{equation}\label{Problem}
\begin{cases}
-\Delta u = \lambda u &\text{ in }  \Omega\\
u=0 &\text{ in }  \partial \Omega.
\end{cases}
\end{equation}It is well known that the Courant minimax principle guarantees the existence of an infinite sequence of eigenvalues $\lambda_i$ to \eqref{Problem}  tending to $\infty$ as $i\to \infty$. See for instance the books \cite{Taylor} and \cite{Z}. 

In the 1990's, the theory was generalized to deal  with eigenvalue problems involving nonlinear operators  such as  the $p$-Laplacian, 

\begin{equation}\label{Problemp}
\begin{cases}
-\Delta_p u = \lambda |u|^{p-2}u &\text{ in }  \Omega\\
u=0 &\text{ in }  \partial \Omega.
\end{cases}
\end{equation}Many results have been obtained on the structure of the spectrum of \eqref{Problemp}. As for \eqref{Problem}, it was shown in \cite{GA} that \eqref{Problemp} has a sequence of nondecreasing positive eigenvalues converging to infinite. Moreover, the first eigenvalue is simple and isolated \cite{L}. 

Let us also note that recently a lot of effort has been put to extend these results for nonlocal operators (to quote a few works, see for instance \cite{B}, \cite{FP},  \cite{LL} and the references therein).

In all these examples, the corresponding functions typically belong to standard Sobolev spaces. However, many nonlinear phenomena cannot be captured adequately within this framework. In particular, problems with non-standard growth conditions, anisotropic behavior, or rapidly increasing nonlinearities require a more flexible functional setting. Orlicz–Sobolev spaces provide such a setting, allowing the treatment of functionals whose growth is dictated by a general Young function $M(t)$ rather than a fixed power $t^p$.

In this context, the eigenvalue problem has been studied by several authors. See for instance \cite{FN}, \cite{GH}, \cite{GM}, \cite{MLo}, \cite{MT}, \cite{OS}, \cite{T}, \cite{YM}, among others.  What makes these operators especially attractive for applications is their ability to exhibit distinct diffusion regimes depending on the magnitude of $|\nabla u|$; that is, the operator may behave differently when $|\nabla u| \ll 1$ and when $|\nabla u| \gg 1$. This phenomenon is known in the literature as \emph{nonstandard growth} in elliptic operators. In the setting of Orlicz-Sobolev spaces, the classical structure of eigenvalue problems is given by
\begin{equation}\label{m-problem}
\begin{cases}
-\Delta_m u = \lambda h(u) & \text{in } \Omega,\\
u = 0 & \text{in } \partial \Omega,
\end{cases}
\end{equation}
where 
$$
-\Delta_m u := \operatorname{div} \left( \frac{m(|\nabla u|)}{|\nabla u|} \nabla u \right)
$$
denotes a generalized $m$-Laplacian operator and the function $h: \R\to \R$ satisfies certain growth conditions. A central object in the analysis of these problems is the primitive of $m$, defined by 
  $$M(t):=\int_0^t m(s)\,ds.$$The function $M$  is assumed to be an $N$-function.  The natural approach to deal with problem \eqref{m-problem} is to look for critical values of the associated  Rayleigh-type quotient
   $$
   \frac{\displaystyle\int_\Omega M( |\nabla u|)\,dx}{\displaystyle\int_\Omega H(u)\,dx},
   $$
where $H(u)=\int_0^u h(t)\, dt.$

Since this Rayleigh-type quotient is not homogeneous, it is convenient to consider critical values of  $\int_\Omega M( |\nabla u|)\,dx$ subject to the constraint $\int_\Omega H(u)\,dx=\mu$, and these critical values depend on the normalization parameter $\mu$. Moreover, in order to apply the Ljusternik-Schnirelmann theory it is required that the functionals are of class $C^1$ and that the associated Sobolev spaces are reflexive and separable. These conditions impose severe restrictions on $M$, namely, it is needed that $M$ and its conjugate $\bar{M}$  satisfy the $\Delta_2$-condition.

This issue was tackled  by Tienari in \cite{T} applying a suitable Galerkin type argument was able to overcome these difficulties  and proved the existence of a sequence of eigenvalues tending to infinite for each normalization parameter.

\normalcolor

   In  \cite{G}, Gossez considers  higher-order operators of the form
   $$
   A(u)=\sum_{|\alpha|\leq k} (-1)^{|\alpha|} D^{\alpha} A_{\alpha}\bigl(x,u,\ldots,\nabla^{k}u\bigr),
   $$
   where the functions $A_\alpha$ satisfy suitable Carath\'eodory, growth, and monotonicity conditions (see conditions (4.2)--(4.4) in \cite{G}). Under these assumptions, Gossez was able to find conditions under which the source problem $$A(u)=f \quad \text{in }\Omega, \qquad u=0 \quad \text{on }\partial \Omega$$has a weak solution in the appropriate Sobolev space.

% and by working within a complementary system, it is shown that the operator $T$, defined on an appropriate domain by
%   $$
%   \langle v, Tu \rangle = \int_{\Omega} \sum_{|\alpha|\leq k} A_{\alpha}\bigl(x,u,\ldots,\nabla^{k}u\bigr)\, D^{\alpha}v \, dx,
%   $$
%   is pseudomonotone.
   
   In the present work, we combine and extend these two lines of research by investigating a \emph{generalized polyharmonic eigenvalue problem} in Orlicz--Sobolev spaces without the \(\Delta_2\)-condition, i.e., given a bounded domain $\Omega \subset \R^n$, we consider the eigenvalue problem

\begin{equation}\label{MP}
\begin{cases}
A(u) = \lambda h(x, u) & \text{in }\Omega\\
u =0  &  \text{on }\partial \Omega.
\end{cases}
\end{equation}

In order to apply the Ljusternik-Schnirelmann theory to \eqref{MP} we need to restrict ourselves to the case where $A$ is the derivative of a functional of the form 
$$\mathcal{G}(u):=\int_\Omega G(x, u, \nabla u, \dots, \nabla^k u)\,dx$$where $G: \Omega \times \mathbb{R}^N \to \mathbb{R}$ is a Carathéodory function with precise hypothesis given below. Therefore, we will consider operators of the form
$$\P^k_G(u) :=A(u)= \mathcal{G}'(u)= \sum_{\alpha\in A_k} (-1)^{|\alpha|} D^\alpha G_\alpha(x,\xi(u)),$$where $\xi(u)= ( u, \nabla u, \dots, \nabla^k u)$. The operator $\P^k_G$ will be called the $G$-polyharmonic operator of order $2k$.

%where the polyharmonic operator
%$$ \P^k_G(u) = \sum_{\alpha\in A_k} (-1)^{|\alpha|} D^\alpha G_\alpha(x,\xi(u)) $$ is the Fr\'echet derivative of the energy functional (see Section \ref{preliminaries} for notation)
%$$\mathcal{G}(u):=\int_\Omega G(x, u, \nabla u, \dots, \nabla^k u)\,dx$$where $G: \Omega \times \mathbb{R}^N \to \mathbb{R}$ is a Carathéodory function, strictly convex in the $\xi$-variable, and satisfying natural coercivity and growth conditions linked to an $N$-function $M$ that does not necessarily satisfy the $\Delta_2$-condition.  The function $\mathcal{H}'(u):= H(x, u) = \int_0^u h(x, t)\,dt$ represents a lower-order term with subcritical growth governed by another $N$-function $B$, chosen so that the embedding $W^k_0 E_M(\Omega) \hookrightarrow E_B(\Omega)$ is compact. Thus,  solutions of \eqref{MP} correspond to critical points of the Rayleigh-type quotient
%   $$
%   \mathcal{R}(u) = \frac{\displaystyle\int_\Omega G(x, u, \nabla u, \dots, \nabla^k u)\,dx}{\displaystyle\int_\Omega H(x, u)\,dx}.
%   $$
%   
%   
  Our main result can be summarized  as follows (for a precise statement, see Theorem \ref{main theorem}):
\begin{theorem}\label{main intro}Under the assumptions of $G$ and $H$ given in Section \ref{assumptions}, for any $\mu>0$, there exists a sequence of eigenvalues $\lambda_i$ of Problem \eqref{MP} such that $\lambda_i \to \infty$ as $i\to \infty$ and $\int_\Omega H(x, u_i)\,dx= \mu$, where $u_i$ is the eigenfunction associated to $\lambda_i$. 
\end{theorem}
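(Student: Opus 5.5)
The plan follows Tienari's Galerkin strategy, adapted to the higher-order, non-$\Delta_2$ setting via Gossez's complementary systems. Since $W^k_0L_M(\Omega)$ is in general neither reflexive nor separable and $\mathcal{G}$ need not be $C^1$ there, Ljusternik--Schnirelmann theory cannot be applied directly. Instead I will work on an increasing sequence of finite-dimensional subspaces $V_1\subset V_2\subset\cdots$ spanned by functions in $C_c^\infty(\Omega)$, whose union is dense in $W^k_0E_M(\Omega)$ in norm and in $W^k_0L_M(\Omega)$ for the modular (or $\sigma(\Pi L_M,\Pi E_{\bar M})$) topology. On each $V_m$ all norms are equivalent, and $\mathcal{G},\mathcal{H}$ are $C^1$ by the Carath\'eodory and growth assumptions of Section \ref{assumptions}. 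The constraint set $S_m=\{u\in V_m:\int_\Omega H(x,u)\,dx=\mu\}$ is a symmetric $C^1$ manifold, since $\langle \mathcal{H}'(u),u\rangle\neq0$ by the sign and growth hypotheses on $h$.

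On $S_m$ I will define the minimax values
\[
c_{i,m}=\inf_{K\in\Gamma_{i,m}}\sup_{u\in K}\mathcal{G}(u),
\]
where $\Gamma_{i,m}$ is the family of compact symmetric subsets of $S_m$ with Krasnoselskii genus at least $i$. Classical finite-dimensional LS theory gives critical points $u_{i,m}\in S_m$ with Lagrange multipliers $\lambda_{i,m}$ satisfying
\[
\langle \mathcal{G}'(u_{i,m}),v\rangle=\lambda_{i,m}\langle \mathcal{H}'(u_{i,m}),v\rangle \quad\text{for all } v\in V_m.
\]
Since $\Gamma_{i,m}\subset\Gamma_{i,m+1}$, the values $c_{i,m}$ are nonincreasing in $m$, and each is bounded below by a constant independent of $m$, obtained from the coercivity of $G$ together with the compact embedding $W^k_0E_M\hookrightarrow E_B$ that controls $\mathcal{H}$. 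Using the coercivity of $G$ in terms of $M$, the sequence $(u_{i,m})_m$ is bounded in $W^k_0L_M$. The multipliers are bounded above and below via the identity
\[
\lambda_{i,m}=\frac{\langle\mathcal{G}'(u_{i,m}),u_{i,m}\rangle}{\langle\mathcal{H}'(u_{i,m}),u_{i,m}\rangle}
\]
and the structural inequalities relating $G_\alpha\xi_\alpha$ to $G$.

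The main obstacle is the passage to the limit $m\to\infty$ without $\Delta_2$. I will extract a weak-$*$ limit $u_i$ of $u_{i,m}$ in $W^k_0L_M$ and a limit $\lambda_i$ of $\lambda_{i,m}$. Compactness of the embedding gives $\int H(x,u_i)=\mu$ and $h(x,u_{i,m})\to h(x,u_i)$ in $E_{\bar B}$. I will then identify the limit equation using Gossez's pseudomonotonicity of $\mathcal{P}^k_G$ in the complementary system $(W^k_0L_M,W^k_0E_M;\Pi L_{\bar M},\Pi E_{\bar M})$. This requires showing
\[
\limsup_m\langle\mathcal{G}'(u_{i,m}),u_{i,m}-v_m\rangle\le0
\]
for $v_m\in V_m$ converging modularly to $u_i$. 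I plan to obtain $v_m$ by mollifying and truncating $u_i$ and projecting onto $V_m$, using the Galerkin equation to rewrite the pairing as $\lambda_{i,m}\int h(x,u_{i,m})(u_{i,m}-v_m)$, which tends to $0$. Gossez's Theorem (pseudomonotone $\Rightarrow$ the limit equation holds and $G_\alpha(x,\xi(u_{i,m}))\rightharpoonup G_\alpha(x,\xi(u_i))$ weak-$*$ in $L_{\bar M}$) then yields that $u_i$ is a weak eigenfunction with eigenvalue $\lambda_i$.

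Finally, $\lambda_i\to\infty$. Since $\lambda_{i,m}$ is comparable to $c_{i,m}$, it suffices that $c_i=\lim_m c_{i,m}\to\infty$. I will argue as in Tienari. For any $R$, the set $\{\mathcal{G}\le R\}\cap\{\int H=\mu\}$ is precompact in $E_B$ by the compact embedding. Hence it can be covered by finitely many small $E_B$-balls avoiding $0$, so it has finite genus independent of $m$. Therefore $c_{i,m}>R$ once $i$ exceeds that genus, uniformly in $m$.
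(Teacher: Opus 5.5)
\textbf{There is a genuine gap, and it comes from the normalization $\int_\Omega H(x,u)\,dx=\mu$. It shows up at your last step.} From \eqref{Gcota} you only get the one-sided bound $\lambda_{i,m}\ge c_{i,m}/\int_\Omega h(x,u_{i,m})u_{i,m}\,dx$. So ``comparable'' requires $\int_\Omega h(x,u_{i,m})u_{i,m}\,dx$ to stay bounded, or be $o(c_{i,m})$, uniformly in $i$. Nothing in Section \ref{assumptions} provides this. Prescribing $\int_\Omega H(x,u)\,dx=\mu$ does not control $\int_\Omega h(x,u)u\,dx$ without an extra Ambrosetti--Rabinowitz/$\Delta_2$-type hypothesis such as $h(x,u)u\le C\,H(x,u)+g(x)$, which is not assumed ($B$ itself may violate $\Delta_2$). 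The compact embedding $Y_0\subset E_B$ does not help either: it controls $\int_\Omega h(x,u)u\,dx$ only on bounded subsets of $Y$. Coercivity gives only $\theta\|\nabla^k u_{i,m}\|_M\le c_{i,m}$, and this bound blows up with $i$. So your covering/genus argument correctly yields $c_i\to\infty$, but not $\lambda_i\to\infty$.

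The same normalization also breaks the setup. The hypotheses do not make $\H$ unbounded along rays, so $S_m$ need not be a sphere and can even be empty. For instance, $h(x,u)=ue^{-u^2}$ satisfies every assumption on $h$ (take $f\equiv 1$). Yet $\H(u)<|\Omega|/2$ for all $u$, so no function satisfies $\int_\Omega H(x,u)\,dx=\mu$ when $\mu\ge|\Omega|/2$. This is why the version the paper actually proves, Theorem \ref{main theorem}, prescribes $\G(\bar u_i)=r$ and lets $\H(\bar u_i)=c_i(r)$ vary.

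\textbf{With the paper's normalization the argument runs the other way.} By $M$-coercivity and strict convexity, $\mathcal M_r\cap V_n$ is radially homeomorphic to the unit sphere of $V_n$, and all Galerkin eigenfunctions are bounded in $Y$. Lemma \ref{lemma convergences} gives $c_i(r)\to 0$. Boundedness plus compactness then force $\bar u_i\to 0$ in $E_B$, so $\langle \bar u_i,\H'(\bar u_i)\rangle\to 0$ while $\langle \bar u_i,\P^k_G(\bar u_i)\rangle\ge \G(\bar u_i)=r$. Hence $\bar\lambda_i\ge r/\langle \bar u_i,\H'(\bar u_i)\rangle\to\infty$.

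\textbf{A second, smaller gap: the upper bound on $\lambda_{i,m}$.} You claim a bound uniform in $m$ ``via the identity and structural inequalities.'' That would need $\sum_\alpha G_\alpha(x,\xi)\xi_\alpha\le C\,G(x,\xi)$, which for $G(x,\xi)=M(|\xi|)$ is exactly the $\Delta_2$-condition \eqref{delta 2}. The paper instead uses monotonicity against a fixed $\phi\in V_{n_0}$ chosen so that $\int_\Omega h(x,\bar u)(\bar u-\phi)\,dx<0$. Testing the Galerkin equation with $u_i^n-\phi$ gives $\lambda_i^n\le \langle u_i^n-\phi,\P^k_G(\phi)\rangle/\langle u_i^n-\phi,\H'(u_i^n)\rangle$, whose right-hand side has a finite limit.

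Apart from these points, your fixed-$i$ limit passage (Galerkin equation plus Gossez's pseudomonotonicity, Theorem \ref{pseudo}) is in line with Lemma \ref{bar u lemma}.
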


 \subsection*{Organization of the paper}  The paper is organized as follows. In Section \ref{preliminaries}, we review the necessary background on Orlicz and Orlicz-Sobolev spaces, introduce the complementary system framework, and state our precise assumptions on $G$ and $H$. Section \ref{proof of theorem 1} contains the proof of the main existence theorem for eigenvalues and eigenfunctions.

%\newpage
%Motivar la relevancia y discutir antecedentes (laplaciano, bilaplaciano, poliarmónico, p-laplaciano, bi-p-laplaciano, Tienari, etc)
%
%
%El setting es Orlicz:
%
%Also, the function 

%The nonlinearity $H$ is a given function satisfying the conditions:
%
%
% 
% 
%  Recall that  the conjugate function $\bar{M}$ of $M$ is defined as
%$$\bar{M}(t):=\sup\left\lbrace st-M(s): s\in \mathbb{R}\right\rbrace.$$
%
%\begin{example}Let us consider the eigenvalue problem for the $m$-Laplacian
%\begin{equation}\label{eigenvalue g lap}
%\begin{cases}
%-\text{div}\left(m(|\nabla u|)\dfrac{\nabla u}{|\nabla u|}\right)=\lambda g(u) \quad \text{in }\Omega\\
%u \in W^1_0L_M(\Omega).
%\end{cases}
%\end{equation}Observe that \eqref{eigenvalue g lap} is a particular case of \eqref{general eigenvalue pro} taking $k=1$ and
%$$A_i(x, u, \nabla u) = m(|\nabla u|)\dfrac{\partial_i u}{|\nabla u|}, \quad i=1, ..., n.$$ The functions $A_\alpha$ satisfy (A1). To prove (A2), observe that since $m$ is increasing and so
%\begin{equation}\label{ex1}
%m(t)t \leq \int_{t}^{2t}m(s)\,ds \leq M(2t).
%\end{equation}
%Moreover, by Young's inequality,
%$$m(t)t= M(t)+ \bar{M}(m(t)),$$which implies
%$$ \bar{M}(m(t))=m(t)t -M(t) \leq M(2t),$$since $M(t)\geq 0$ for $t\geq 0$ and where we used \eqref{ex1}. Therefore, $(A2)$ holds. Finally, since $m$ is increasing, the assumption $(A3)$ is valid. 
%\end{example}
%
%The main result of the paper can be summarized as 
%\begin{theorem}
%There exists a sequence $\{\lambda_j\}_{j\in\N}$ of critical values of \eqref{general eigenvalue pro}
%\end{theorem}

\section{Preliminaries}\label{preliminaries}

\subsection{Orlicz and Orlicz-Sobolev spaces} 

This subsection contains no new material and it is well known for experts. It is included only for completeness and everything here is contained, for instance, in the book \cite{K}.

\subsubsection{$N-$functions}
We start with the definition of an $N$-function:

\begin{definition}
An $N$-function is a function $M\colon \R\to \R$ satisfying:
\begin{itemize}
\item $M$ is continuous, convex and even.
\item $M(t)>0$ for all $t>0$.
\item Finally, $M$ is sublinear at $0$ and superlinear at $\infty$, that is,
$$\lim_{t\to 0}\dfrac{M(t)}{t}=0 \quad \text{and }\quad \lim_{t\to \infty}\dfrac{M(t)}{t}=\infty.$$
\end{itemize}
\end{definition}

In \cite[Chapter 1]{K}, it is provided the following result.

\begin{theorem}Any $N$-function admits the representation
$$M(t)=\int_0^{|t|}m(s)\,ds$$where $m$ is  nondecreasing, left-continuous, positive for $s>0$ and it satisfies
$$m(0)=0 \quad\text{and}\quad\lim_{s\to \infty}m(s)=\infty.$$
\end{theorem}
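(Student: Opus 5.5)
The plan is to take $m$ to be the right derivative of $M$ on $[0,\infty)$ and then modify it at its jump points to make it left-continuous. Because $M$ is convex and finite on $\R$, for every $t\ge 0$ the one-sided derivatives
$$
M'_\pm(t)=\lim_{h\to 0^\pm}\frac{M(t+h)-M(t)}{h}
$$
exist and are finite. Monotonicity of difference quotients gives $M'_-(t)\le M'_+(t)\le M'_-(s)$ for $t<s$. Set $m(t):=M'_-(t)$ for $t>0$ and $m(0):=0$.

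\emph{Step 1: basic properties of $m$.} $M'_-$ is nondecreasing, and it is left-continuous because it is the left limit of the nondecreasing function $M'_+$. Indeed, $M'_-(t)=\lim_{s\uparrow t}M'_+(s)$, which follows from the sandwich $M'_+(s)\le \frac{M(t)-M(s)}{t-s}\le M'_-(t)$ combined with continuity of $M$.

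\emph{Step 2: the integral representation.} A convex function on an interval is locally Lipschitz, hence absolutely continuous, and it is differentiable a.e. with $M'=M'_-$ a.e. (the jumps are countable). Therefore $M(t)-M(0)=\int_0^t m(s)\,ds$ for $t\ge0$. Next we show $M(0)=0$. Sublinearity at $0$ forces $M(t)\to0$ as $t\to0$, and continuity then gives $M(0)=0$. Since $M$ is even, we get $M(t)=\int_0^{|t|}m(s)\,ds$.

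\emph{Step 3: the boundary and positivity conditions.} For $s>0$ we have $M'_-(s)\ge \frac{M(s)-M(0)}{s}=\frac{M(s)}{s}>0$, so $m$ is positive on $(0,\infty)$. For the behaviour at $0$, observe that $0\le M'_+(0)\le \frac{M(h)}{h}\to0$, so $M'_+(0)=0$; by left-continuity of $m$ on $(0,\infty)$ and $M'_-(s)\le M'_+(s')$ for $s'<s$, we get $m(s)\to M'_+(0)=0$ as $s\downarrow0$, which is consistent with $m(0)=0$. For the limit at infinity, $m(s)\ge M(s)/s\to\infty$ by superlinearity.

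The only delicate point is the absolute continuity of convex functions in Step 2. I will cite the standard fact (as in \cite{K}) rather than reprove it. Alternatively one can argue directly: $\int_0^t M'_+ = M(t)-M(0)$ by a Riemann-sum argument using monotone difference quotients, and $M'_+$ and $M'_-$ differ only on a countable set, so integrating either gives the same result.
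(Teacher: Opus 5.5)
Your argument is correct and is essentially the standard proof behind the paper's citation of \cite[Chapter 1]{K}, which the paper invokes without proof: take a one-sided derivative of the convex function $M$ and use absolute continuity. Taking the left derivative $M'_-$ instead of the right derivative used there is exactly what gives the left-continuity asserted in the statement. One slip in Step 3: for $s'<s$ the correct inequality is $M'_+(s')\le M'_-(s)$, not the reverse, so the limit $m(s)\to 0$ as $s\downarrow 0$ should instead come from $0<M'_-(s)\le \frac{M(2s)-M(s)}{s}\le 2\,\frac{M(2s)}{2s}\to 0$ (this limit is not actually needed for the statement).
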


The following order for $N$-functions will be used:
\begin{definition}
Given two $N$-functions $M_1$ and $M_2$, we write  $M_1 \ll M_2$ if 
$$
\lim_{t\to \infty}\dfrac{M_1(t)}{M_2(\lambda t)}=0,
$$
for any $\lambda>0$. 
\end{definition}

Given an $N-$function $M$, a fundamental tool in Orlicz spaces is the {\em conjugate} function $\bar M$.
\begin{definition}
Let $M$ be an $N-$function. The conjugate function $\bar M$ is defined as
$$
\bar M(s) = \sup\{st-M(t)\colon t>0\}.
$$
\end{definition}
Observe that $\bar M$ is the optimal function that satisfies the {\em Young-type} inequality:
$$
st\le M(t) + \bar M(s).
$$
It is a well known fact that $\bar M$ is also an $N$-function and that $\bar{\bar M} = M$.

An important class of $N$-functions is the $\Delta_2$-class.
\begin{definition}
An $N-$function $M$ is said to verity the $\Delta_2$-condition, if there exists $C>1$ such that
$$
M(2t)\le C M(t), \quad \text{for all } t>0.
$$
\end{definition}

By \cite[Theorem 4.1, Chapter 1]{K},  an N-function  satisfies the $\Delta_{2}$ condition if and only if there is $p^{+} > 0$ such that
			\begin{equation}\label{delta 2}
		\frac{tm(t)}{M(t)} \leq p^{+}, ~~~~~\forall\, t>0.
	\end{equation}
	On the other hand, by \cite[Theorem 4.3,   Chapter 1]{K}, a necessary and sufficient condition for the N-function $\bar{M} $  to satisfy the $\Delta_{2}$ condition is that there is $p^{-} > 1$ such that
			\begin{equation}\label{delta 2 2}
	p^{-} \leq 	\frac{tm(t)}{M(t)}, ~~~~~\forall\, t>0.
	\end{equation}

\begin{example}
Some examples of $N$-functions are
\begin{itemize}
\item Power functions: $M(t)=\frac{|t|^p}{p}$ with $1<p<\infty$

\item Logarithmic perturbations: $M(t) = \frac{|t|^p}{p}\ln^q(1+|t|)$, with $1<p<\infty$, $0<q<\infty$.

\item Exponential functions: $M(t) = e^{|t|}-1$
\end{itemize}

The first two examples verify the $\Delta_2$-condition while the third one does not.
\end{example}

\subsubsection{Orlicz spaces}
Let $\Omega$ be an open subset of $\R^n$. Given an $N$-function $M$, the Orlicz class $\mathcal{L}_M(\Omega)$ is defined as the set of real-valued and measurable functions $u$ so that
$$
\int_\Omega M(u(x))\,dx<\infty.
$$
The Orlicz class $\mathcal L_M(\Omega)$ is not in general a linear space, but it is always a convex space. The Orlicz space $L_M(\Omega)$ is defined as the linear hull of $\mathcal{L}_M(\Omega)$ and it is a Banach space equipped with the Luxemburg norm
$$
\|u\|_M=\inf\left\lbrace k>0\colon \int_\Omega M(\tfrac{u}{k})\,dx\le 1\right\rbrace.
$$
The closure in $L_M(\Omega)$ of the bounded and measurable functions with compact support in $\overline{\Omega}$ is denoted by $E_M(\Omega)$. We then have the inclusions
$$
E_M(\Omega)\subset \mathcal L_M(\Omega)\subset L_M(\Omega).
$$
These inclusions are strict unless $M$ satisfies the $\Delta_2$-condition in which we have all equalities.

On the other hand, for a general $N-$function $M$, $L_M(\Omega)$ is not separable. The separability of $L_M(\Omega)$ turns out to be equivalent to the $\Delta_2$-condition on $M$. For a general $N$-function $M$, $E_M(\Omega)$ is separable and its dual can be identified with $L_{\bar{M}}(\Omega)$ by means of the interior product
$$
\left\langle u, v \right\rangle :=\int_\Omega uv\,dx, \quad u \in E_M(\Omega), v \in L_{\bar{M}}(\Omega).
$$

Observe that $L_M(\Omega)$ is reflexive if and only if $M$ and $\bar M$ both satisfy the $\Delta_2$-condition (see \cite{K}), but for a general $N-$function $M$, we have the relations
$$
(E_M(\Omega))^* = L_{\bar M}(\Omega) \quad \text{and}\quad (E_{\bar M}(\Omega))^* = L_M(\Omega).
$$

\subsubsection{Orlicz-Sobolev spaces}
We introduce some not so standard notation on differentiation.  This notation has been introduced purely for clarity, since it has the advantage over the usual multi-index convention of making an explicit distinction in the order of the coordinate variables with respect to which differentiation is taken. This is specially useful in derivatives of general functionals (see Subsection \ref{notation}). However, we point out that by a rearrangement of derivatives, both notations are equivalent for smooth functions. 

Given $k\in \N$ and $\alpha = (\alpha_1,\dots,\alpha_k)\in \{1,\dots,n\}^k= I_n^k$ and $u\in C^\infty(\Omega)$, we denote
$$
D^\alpha u = \frac{\partial^k u}{\partial x_{\alpha_1}\cdots\partial x_{\alpha_k}}.
$$
We then denote the array
$$
\nabla^k u = (D^\alpha u\colon \alpha\in \{1,\dots,n\}^k)\subset  \R^{n^k}.
$$
Define the set of indices 
$$
A_k = \bigcup_{j=0}^k I_n^j,
$$
where $I_n^0=\{0\}$. Then the set of all derivatives of $u$ up to order $k$ is denoted by $\{ D^\alpha u\colon \alpha\in A_k\}$, where $D^0 u = u$.

We now introduce the definition of the Orlicz-Sobolev spaces.
\begin{definition}
Let $\Omega\subset\R^n$ be an open set, and $M$ be an $N-$function. The Orlicz-Sobolev space of order $k$, denoted by $W^kL_M(\Omega)$, is defined as the set of functions $u$ such that $u$ and its distributional derivatives up to order $k$ lie in $L_M(\Omega)$, i.e.
$$
W^kL_M(\Omega) = \{u\in L_M(\Omega)\colon D^\alpha u\in L_M(\Omega)\text{ for all }\alpha\in A_k\}.
$$
In this space we consider the norm
$$
\|u\|_{k, M}:= \sum_{\alpha \in A_k}\|D^\alpha u\|_M.
$$
\end{definition}
It is a well known fact that $(W^kL_M(\Omega), \|\cdot\|_{k, M})$ is a Banach space. Moreover, a natural and useful subspace is $W^kE_M(\Omega)$.
\begin{definition}
Let $\Omega\subset\R^n$ be an open set, and $M$ be an $N-$function. We define the space $W^kE_M(\Omega)$ as the set of functions $u$ such that $u$ and its distributional derivatives up to order $k$ lie in $E_M(\Omega)$, i.e.
$$
W^kE_M(\Omega) = \{u\in E_M(\Omega)\colon D^\alpha u\in E_M(\Omega)\text{ for all }\alpha\in A_k\}.
$$
\end{definition}

Clearly, $W^kE_M(\Omega)\subset W^kL_M(\Omega)$ and it is a closed subspace.

The spaces  $W^kL_M(\Omega)$ and $W^kE_M(\Omega)$ are naturally embedded into the product spaces $\Pi_{\alpha\in A_k}L_M(\Omega)$ and $\Pi_{\alpha\in A_k} E_M(\Omega)$ respectively. Hence, they inherit the functional properties of these product spaces, namely: $W^kE_M(\Omega)$ is separable and $W^kL_M(\Omega)$ is separable if and only if $M$ satisfies the $\Delta_2$-property, in which case both spaces agree.

In order to deal with boundary conditions we need to define the spaces of functions that vanish at the boundary.

\begin{definition}
Let $\Omega\subset \R^n$ be an open set and $M$ be an $N-$function. We define $W^k_0L_M(\Omega)$  as the closure of the set of test functions $\mathcal{D}(\Omega)$ with respect to the $\sigma(\Pi_{\alpha\in A_k}L_M(\Omega), \Pi_{\alpha\in A_k} E_{\bar M}(\Omega))$ topology and $W^k_0E_M(\Omega)$ as the closure of $\mathcal{D}(\Omega)$ with respect to the norm topology.
\end{definition}

For the spaces $W^k_0L_M(\Omega)$ and $W^k_0E_M(\Omega)$ the following Poincaré-type inequality was proved in  \cite[Lemma 5.7]{G}
\begin{equation}\label{Poincare}
\|u\|_{k, M}\le C \|\nabla^k u \|_M,
\end{equation}
for all $u\in W^k_0L_M(\Omega)$.

The dual space $W^{-k}L_{\bar M}(\Omega) = (W^k_0E_M(\Omega))^*$ was characterized in \cite{G}. In that paper it is shown that, if $\Omega$ has the segment property,
$$
W^{-k}L_{\bar{M}}(\Omega) = \left\lbrace f\in \mathcal{D}'(\Omega): f=\sum_{\alpha\in A_k}(-1)^{|\alpha|}D^\alpha f_\alpha,\quad f_\alpha \in L_{\bar{M}}(\Omega)\right\rbrace.
$$

Moreover it is also shown in \cite{G} that if we define
$$
W^{-k}E_{\bar{M}}(\Omega) = \left\lbrace f\in \mathcal{D}'(\Omega): f=\sum_{\alpha\in A_k}(-1)^{|\alpha|}D^\alpha f_\alpha,\quad f_\alpha \in E_{\bar{M}}(\Omega)\right\rbrace,
$$
then $(W^{-k}E_{\bar{M}}(\Omega))^* = W^k_0L_M(\Omega)$.

In other words, these facts prove that if we denote $Y=W^k_0L_M(\Omega)$, $Y_0=W^k_0E_M(\Omega)$, $Z=W^{-k}L_{\bar{M}}(\Omega)$ and $Z_0=W^{-k}E_{\bar{M}}(\Omega)$, then $(Y, Y_0, Z, Z_0)$ form a {\em complementary system}. That is, $Y$ and $Z$ are real Banach spaces in duality with respect to a continuous pairing and $Y_0$ and $Z_0$ are closed subspaces of $Y$ and $Z$ respectively such that, by means of the duality pairing, the dual of $Y_0$ can be identified to $Z$ and the dual of $Z_0$ can be identified to $Y$. See \cite{G, T}.

%Next, we introduce some embedding results from \cite{Cianci}. Given an $N$-function $M_0$ such that
%\begin{equation}
%\int_0^1 \dfrac{M_0^{-1}(t)}{t^{1+1/n}}\,dt <\infty \quad \text{and }\quad  \int_0^\infty \dfrac{M_0^{-1}(t)}{t^{1+1/n}}\,dt =\infty.
%\end{equation}We then define a new $N$ function by
%$$M_1^{-1}(t):= \int_0^t \dfrac{M_0^{-1}(s)}{s^{1+1/n}}\,ds.$$Repeating this procedure, we find a finite collection of $N$-functions $M_0, M_1, ..., M_q$, given by
%\begin{equation}\label{Mj}
%M_j^{-1}(t) := \int_0^t \dfrac{M_{j-1}^{-1}(s)}{s^{1+1/n}}\,ds,
%\end{equation}
%where $q\leq n$ is the largest integer such that
%$$\int_1^\infty \dfrac{M_{j-1}^{-1}(t)}{t^{1+1/n}}\,dt =\infty \quad \text{but }\int_1^\infty \dfrac{M_{j}^{-1}(t)}{t^{1+1/n}}\,dt <\infty.$$
%
%In [DT] it is proved the following immersion result:
%\begin{theorem}
%Let $\Omega$ be open and bounded. Let $M$ be an $N-$function and let $M_0, M_1,\dots, M_q$ be as in \eqref{Mj}. Then
%\begin{itemize}
%\item If $k\le q$ and $B\ll M_k$, then $W^k_0L_M(\Omega)\subset L_B(\Omega)$ with compact inclusion.
%
%\item If $k>q$, then $W^k_0L_M(\Omega)\subset C^{k-q-1}(\overline{\Omega})$, with compact inclusion.
%\end{itemize}
%\end{theorem}

In particular, in \cite{T} the following result for complementary systems is proved
\begin{theorem}\cite[Theorem $3.1$]{T}\label{Tienary}
Assume that $(Y, Y_0, Z, Z_0)$ is a complementary system, with $Y_0$ and $Z_0$ separable, the norm $\|\cdot\|_Z$ is dual to $\|\cdot \|_{Y_0}$, the norm $\|\cdot\|_{Y}$ is dual to $\|\cdot \|_{Z_0}$, and $V\subset Y_0$ is a norm-dense linear subspace. Then, there exists a sequence of mappings $P_n: Y_0\to Y_0$ satisfying:
\begin{itemize}
\item[(i)] $P_n$ is odd and norm-continuous for all $n$.
\item[(ii)] $P_n(Y_0)$ is contained in a finite-dimensional subspace of $V$ for all $n$.
\item[(iii)] If $\left\lbrace u_n\right\rbrace \subset Y_0$ and $u_n \to u \in Y$ for $\sigma(Y, Z_0)$, then $P_n(u_n)\to u$ for $\sigma(Y, Z_0)$.
\item[(iv)]If $\left\lbrace u_n \right\rbrace \subset Y_0$ and $u_n\to u\in Y$ strongly, then $\|P_n(u_n)\|_Y\to \|u\|_Y$.   
\end{itemize}
\end{theorem}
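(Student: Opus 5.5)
The plan is to construct $P_n$ explicitly as nonlinear, odd, continuous ``Galerkin-type'' projections onto finite-dimensional subspaces of $V$. The two ingredients are the separability of $Z_0$ and of $Y_0$, and the density of $V$ in $Y_0$.

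First, fix a sequence $\{z_j\}\subset Z_0$ that is dense in $Z_0$; this uses the separability of $Z_0$. Since $\|\cdot\|_Y$ is dual to $\|\cdot\|_{Z_0}$, we have
\[
\|u\|_Y=\sup_j \frac{|\langle u,z_j\rangle|}{\|z_j\|_{Z_0}}.
\]
Moreover, $\sigma(Y,Z_0)$-convergence of norm-bounded sequences is equivalent to convergence of $\langle\cdot,z_j\rangle$ for every $j$. Next, since $Y_0$ is separable and $V$ is dense in it, choose an increasing sequence of finite-dimensional subspaces $V_1\subset V_2\subset\cdots\subset V$ whose union is dense in $Y_0$.

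For each $n$, I would define $P_n(u)$ as an approximate solution in $V_{m(n)}$ of the finitely many moment conditions
\[
\langle P_n u,z_j\rangle\approx\langle u,z_j\rangle,\qquad j\le n,
\]
chosen in an odd and continuous way. A concrete recipe uses a continuous partition of unity $\{\phi_i\}$ subordinate to a locally finite cover of $Y_0$ by balls of radius $1/n$ with centers $y_i$. I would symmetrize the cover, i.e.\ use centers $\pm y_i$ with $\phi_i(-u)=\phi_{i'}(u)$. For each center $y_i$, pick $v_i\in V_{m(n)}$ with $\|v_i-y_i\|_{Y_0}<1/n$, and set $v_{i'}=-v_i$. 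Then define
\[
P_n(u)=\sum_i \phi_i(u)\,v_i .
\]
Local finiteness handles continuity, and the symmetric choices give oddness. To keep the range inside one fixed finite-dimensional subspace of $V$, I would restrict to countably many centers, using separability. For each $n$ I would then choose $m(n)$ large, but a single finite space is needed, so the cover must be organized through a finite-dimensional quotient.

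I expect that final point to be the main obstacle. A cleaner route is therefore to compose two maps. The first is the continuous linear map $Q_n:u\mapsto(\langle u,z_j\rangle)_{j\le n}\in\R^n$. The second is an odd continuous map $R_n:\R^n\to V_{m(n)}$ such that $\langle R_n(a),z_j\rangle$ is close to $a_j$ and $\|R_n(a)\|_Y$ is close to $\inf\{\|y\|_Y : Q_n y=a\}$. I would take $P_n=R_n\circ Q_n$. This is clearly odd and continuous with finite-dimensional range in $V$, which gives (i) and (ii).

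For (iii), let $u_n\to u$ in $\sigma(Y,Z_0)$. Then $u_n$ is bounded by uniform boundedness, so $\|P_n u_n\|$ stays bounded by construction of $R_n$. Also $\langle P_n u_n,z_j\rangle-\langle u_n,z_j\rangle\to0$ for each fixed $j$, which yields weak$^*$ convergence to $u$.

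For (iv), the lower bound $\liminf_n\|P_n u_n\|\ge\|u\|$ follows from weak$^*$ lower semicontinuity of the dual norm. For the upper bound I would use that $u_n\to u$ strongly and that $Q_n u_n$ is attained by $u_n$ itself. So the minimal-norm preimage has norm at most $\|u_n\|_Y$, and $R_n$ nearly realizes it; to be safe, one can tune $R_n$ on the compact set $\{a\in\R^n:|a|\le r\}$, with $r$ growing in $n$. The delicate step is constructing $R_n$ odd, continuous, and nearly norm-minimizing simultaneously. I would obtain it by taking a finite $\varepsilon_n$-net of the relevant compact ball in $\R^n$, symmetric under $a\mapsto -a$. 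At each net point I would pick a near-minimal-norm preimage in $V$, which is possible by density of $V$ in $Y_0$ together with weak$^*$ density of $Y_0$ in $Y$ (via Goldstine). Finally I would interpolate with a symmetric partition of unity, using convexity of the norm to control the interpolated norms.
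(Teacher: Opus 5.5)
The paper does not prove this statement. It imports it verbatim from Tienari [T, Theorem 3.1] and uses it as a black box, so there is no internal argument to compare with. Judged on its own, your second route is a correct, self-contained proof: take $P_n=R_n\circ Q_n$, where $Q_n$ is the linear map given by finitely many test functionals and $R_n$ is an odd lifting into $V$ built from a symmetric partition of unity on a compact ball of $\R^n$.

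What it adds over the citation is that it shows where each hypothesis is used:
\begin{itemize}
\item The two norm-duality assumptions give the bipolar/Goldstine fact that the $\sigma(Y,Z_0)$-closure of the unit ball of $Y_0$ is the unit ball of $Y$. This makes $\inf\{\|v\|_Y: v\in V,\ Q_nv=a\}$ equal to the quotient norm $\nu_n(a)=\inf\{\|y\|_Y: y\in Y,\ Q_ny=a\}$, and hence yields the upper bound $\|P_nu_n\|_Y\le \|u_n\|_Y+\ve_n$ in (iv).
\item The same assumptions give the weak$^*$ lower semicontinuity of $\|\cdot\|_Y$, which gives the lower bound in (iv).
\item Separability of $Z_0$ supplies the countable test family.
\item As far as I can see, separability of $Y_0$ is never used.
\end{itemize}

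Drop your first attempt entirely. A map with range in one finite-dimensional subspace cannot be uniformly $2/n$-close to the identity on the unit sphere of an infinite-dimensional space, by Riesz's lemma. The ``finite-dimensional quotient'' you were looking for is exactly $Q_n$.

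When you write out the second route, fix three details.
\begin{enumerate}
\item Take the $z_j$ of norm one and linearly independent. This is possible in an infinite-dimensional separable $Z_0$, and the finite-dimensional case is trivial with $P_n=\mathrm{id}$. Then $Q_n$ maps $Y$, and by the density above also $V$, onto $\R^n$. So $\nu_n$ is a genuine norm on $\R^n$, hence Lipschitz for the Euclidean norm. This Lipschitz bound is what turns the convexity estimate $\|\sum_i\phi_i(a)w_i\|_Y\le\sum_i\phi_i(a)(\nu_n(a^i)+\delta)$ into $\nu_n(a)+O(\delta)$.
\item Let the controlled radius grow, e.g.\ $r_n=n$, so that $|Q_nu_n|\le\sqrt n\,\sup_m\|u_m\|_Y\le r_n$ for large $n$.
\item Extend $R_n$ outside the ball by $a\mapsto R_n(r_na/|a|)$, which keeps oddness and continuity.
\end{enumerate}

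With these in place, (iii) follows from the boundedness of $P_nu_n$ together with convergence against each $z_j$. (iv) follows from weak$^*$ lower semicontinuity together with the upper bound above.
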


\subsection{Notation} \label{notation}

Let $$N = \sum_{j=0}^k n^j = \frac{n^{k+1}-1}{n-1},$$ then $\R^N\simeq \R\times \R^n\times \R^{n^2}\times \cdots\times \R^{n^k}$ and given $\xi\in \R^N$, we will write $\xi=(\xi^0,\xi^1,\dots,\xi^k)$, with $\xi^j\in \R^{n^j}$. For any index $\alpha\in A_k$, the order of $\alpha$ is defined as the unique $j$ such that $\alpha\in I_n^j$ and this order will be denoted by $|\alpha|$. 

With these notations, given $\xi\in \R^N$, its coordinates are given by
$$
\xi = (\xi^{|\alpha|}_\alpha)_{\alpha\in A_k}.
$$
In order to simplify the notation, we will just write $\xi_\alpha = \xi^{|\alpha|}_\alpha$.

Now, let $G\colon \Omega\times \R^N\to \R$, $G=G(x, \xi)$, be a Carathéodory function, i.e. for a.e. $x\in\Omega$, $G(x, \cdot)$ is continuous and for any $\xi\in \R^N$, $G(\cdot, \xi)$ is measurable.

Assume further that for a.e. $x\in\Omega$, $G(x, \cdot)$ is of class $C^1(\R^N)$. Then we denote its partial derivatives as
$$
G_\alpha(x, \xi) = \frac{\partial G}{\partial \xi_\alpha}(x,\xi), \quad\text{for any }\alpha\in A_k.
$$

With these notations, observe that 
$$
\frac{d}{d\ve}G(x,\xi + \ve\eta)|_{\ve=0} = \sum_{\alpha\in A_k} G_\alpha (x,\xi)\eta_\alpha,
$$
for any $\xi,\eta\in \R^N$ and a.e. $x\in\Omega$.

Finally, given $u\in C^\infty(\Omega)$, we denote $\xi(u) = (u, \nabla^1u,\dots, \nabla^ku)$ and observe that $\xi(u)_\alpha = D^\alpha u$.

Combining these notations, we easily obtain that
$$
\frac{d}{d\ve}\int_\Omega G(x, \xi(u) + \ve\xi(v))\, dx\Big|_{\ve=0} = \int_\Omega \sum_{\alpha\in A_k} G_\alpha(x, \xi(u)) D^\alpha v\, dx = \int_{\Omega} \sum_{\alpha\in A_k} (-1)^{|\alpha|} D^\alpha G_\alpha(x,\xi(u)) v\, dx,
$$
for every $v\in C^\infty_c(\Omega)$ where the last equality holds if $G(x,\cdot)$ is of class $C^2(\R^N)$ for a.e. $x\in\Omega$.

We will use the notation
\begin{equation}\label{polyharmonic op}
\P^k_G(u) = \sum_{\alpha\in A_k} (-1)^{|\alpha|} D^\alpha G_\alpha(x,\xi(u)) 
\end{equation}
and we will call it the $G-$polyharmonic operator.

\subsection{Assumptions}\label{assumptions}
Throughout this paper the following assumptions are made on the nonlinear functions $G$ and $H$:

\subsubsection{Assumptions on $G$}
\begin{itemize}
\item $G$ is Carathéodory, that is, for a.e. $x\in \Omega$, $G(x,\cdot)$ is continuous and for all $\xi\in \R^N$, $G(\cdot, \xi)$ is measurable.

\item $G(x, \cdot)$ is strictly convex a.e. $x\in \Omega$ and $G(x,0)=0$.

\item $G(x, \cdot)$ is of class $C^1$ a.e. $x\in\Omega$.

\item $G$ is $M-$coercive, that is, there exists a constant $\theta>0$ such that
\begin{equation}\label{M-coercive}
\theta \|\nabla^k u\|_M\le \int_\Omega G(x,\xi(u))\, dx .
\end{equation}
for any $u\in W^k_0L_M(\Omega)$.

\item For any $\alpha\in A_k$, the derivatives of $G$, $G_\alpha$, satisfy the growth condition
\begin{equation}\label{growth}
|G_\alpha(x, \xi)|\le a(x) + b\sum_{\beta\in A_k} \bar M^{-1}(M(c \xi_\beta)),
\end{equation}
where $a\in E_{\bar M}(\Omega)$ and $b, c>0$.
\end{itemize}

Some remarks are in order.
\begin{remark}\label{coercivity.Ga}
The $M-$coercivity condition on $G$ is equivalent to 
\begin{equation}\label{Gax}
\int_\Omega \sum_{\alpha\in A_k} G_\alpha(x, \xi(u))D^\alpha u\, dx \ge \theta \|\nabla^k u\|_M.
\end{equation}
\end{remark}
\begin{proof}
In fact, observe that since $G(x,\cdot) \in C^1$ and convex, a.e. $x\in\Omega$, then
$$
\frac{d}{dt}G(x, t\xi) = \sum_{\alpha\in A_k} G_\alpha(x,t\xi)\xi_\alpha
$$
is monotone in $t$ and so
\begin{equation}\label{Gcota}
G(x, \xi) = \int_0^1 \frac{d}{dt}G(x, t\xi)\, dt = \int_0^1 \sum_{\alpha\in A_k} G_\alpha(x,t\xi)\xi_\alpha\,dt \le \sum_{\alpha\in A_k} G_\alpha(x, \xi)\xi_\alpha.
\end{equation}

On the other hand, if \eqref{Gax} holds, 
$$
G(x, \xi(u)) = \int_0^1 \sum_{\alpha\in A_k} G_\alpha(x, t\xi(u))tD^\alpha u\, \frac{dt}{t}
$$
and integrating in $\Omega$,
$$
\int_\Omega G(x, \xi(u))\, dx = \int_0^1\int_\Omega \sum_{\alpha\in A_k} G_\alpha(x, t\xi(u))tD^\alpha u\, dx\, \frac{dt}{t}\ge \theta \int_0^1 \|\nabla^k (tu)\|_M\, \frac{dt}{t} = \theta \|\nabla^k u\|_M,
$$
as we wanted to show.
\end{proof}

\begin{remark}
Observe that the growth condition on $G_\alpha$ implies the bound
$$
G(x, \xi)\le \tilde a(x) + \tilde b \sum_{\beta\in A_k}M( c \xi_\beta),
$$
for some constant $\tilde b>0$ and $\tilde a\in L^1(\Omega)$.
\end{remark}

\begin{proof}
In fact, using Young's inequality, we observe that
\begin{equation}\label{cota.a}
a(x)|\xi_\alpha|\le \frac{1}{c}(\bar M(a(x)) +  M(c\xi_\alpha))
\end{equation}
and
\begin{equation}\label{cota.b}
\bar M^{-1}(M(c\xi_\beta))|\xi_\alpha|\le \frac{1}{c}(M(c\xi_\beta) + M(c\xi_\alpha)).
\end{equation}
Therefore, using the inequality 
$$
G(x,\xi)\le \sum_{\alpha\in A_k} G_\alpha(x,\xi)\xi_\alpha,
$$
proved in the previous remark, we get that
$$
G(x, \xi)\le  \sum_{\alpha\in A_k} |G_\alpha(x,\xi)| |\xi_\alpha| \le \sum_{\alpha\in A_k} \left(a(x) + b\sum_{\beta\in A_k} \bar M^{-1}(M(c\xi_\beta))\right)|\xi_\alpha|.
$$
This estimate together with \eqref{cota.a} and \eqref{cota.b} gives us the desired bound.
\end{proof}

\begin{remark}\label{monotonia derivadas}
It is standard to see that the strict convexity of $G$ implies the strict monotonicity of $(G_\alpha)_{\alpha\in A_k}$, that is
\begin{equation}\label{eq.monotonia}
\sum_{\alpha \in A_k} (G_\alpha(x, \xi) - G_\alpha(x, \eta))(\xi_\alpha - \eta_\alpha) \ge 0
\end{equation}
for any $\xi, \eta\in \R^N$, with equality if and only if $\xi = \eta$.
\end{remark}

\subsubsection{Assumptions on $H$}
On the source term $H(x, u)$ we assume the following:
\begin{itemize}
\item There exists a Carathéodory function $h(x, u)$ such that $h$ is odd in $u$ for a.e. $x\in\Omega$, $h(x, u)u>0$ for $u\neq 0$ and a.e. $x\in\Omega$ and 
$$
H(x, u) = \int_0^u h(x, t)\, dt.
$$

\item $h(x, u)$ satisfies the growth condition
\begin{equation}\label{b.growth}
|h(x, u)|\le f(x) + c\bar B^{-1}(B(cu)),
\end{equation}
for some $f \in L_{\bar B}(\Omega)$ and a subcritical $N-$function $B$, in the sense that the immersion $W^k_0E_M(\Omega)\subset E_B(\Omega)$ is compact.
\end{itemize}

\begin{remark}
For sharp conditions on $B$ and $M$ such that the compactness of the immersion $W^k_0E_M(\Omega)\subset E_B(\Omega)$ holds, we refer to \cite{Cianchi}. See also \cite{DT} for earlier results.
\end{remark}

\begin{remark}
Arguing as before, the growth condition on $h$ implies 
\begin{equation}\label{B.growth}
|H(x, u)|\le \tilde f(x) + \tilde c B(\tilde c u),
\end{equation}
for some $\tilde f \in L^1(\Omega)$ and some constant $\tilde c>0$.
\end{remark}

\section{A generalized eigenvalue problem}\label{proof of theorem 1}

In what follows we will occasionally use this notation  $Y=W^k_0L_M(\Omega)$, $Y_0=W^k_0E_M(\Omega)$, $Z=W^{-k}L_{\bar{M}}(\Omega)$ and $Z_0=W^{-k}E_{\bar{M}}(\Omega)$, and recall that $(Y, Y_0, Z, Z_0)$ is a complementary system.

We proceed as in \cite{T}, and so we take $\left\lbrace v_1, v_2, ...\right\rbrace \subset \mathcal{D}(\Omega)$ to be a countable norm-dense linearly independent subset of $Y_0$ and let $\left\lbrace P_n \right\rbrace$ be the sequence of mappings from Theorem \ref{Tienary} with
$$V_n= \text{span}\left\lbrace v_1, ..., v_n\right\rbrace.$$According to Theorem \ref{Tienary}, for each $n$, there is a positive integer $m_n$ so that
$$P_n(Y_0)\subset V_{m_n}.$$
Define the functionals $\G:D_\G\to \mathbb{R}$ and $\H:D_\H\to \mathbb{R}$ as
$$
\G(u):= \int_\Omega G(x, u, \nabla u, ..., \nabla^k u)\,dx\quad\text{and}\quad \H(u):=\int_\Omega H(x,u)\,dx.
$$
Here, $D_\G=\left\lbrace u\in Y: \G(u)<\infty\right\rbrace$ and $D_\H=\left\lbrace u\in Y: \H(u)<\infty\right\rbrace$. It is easy to see that $Y_0\subset D_\G\subset Y$ and $Y_0\subset D_\H\subset Y$.

Observe that $\G$ and $\H$ are not differentiable functionals in their domains unless $M$ and $B$ satisfy the $\Delta_2$-condition respectively. However, for $u, v\in Y_0$ we have that
\begin{equation}\label{domG'}
\langle v, \G'(u)\rangle = \int_\Omega \sum_{\alpha \in A_k} G_\alpha(x, \xi(u))D^\alpha v\, dx <\infty,
\end{equation}
so $\G'(u)=\P^k_G(u)$ (recall \eqref{polyharmonic op}). We then define $D_{\G'} = D_{\P^k_G}$ as the functions $u\in Y$ such that \eqref{domG'} holds for every $v\in Y_0$.

In a similar manner, we define $D_{\H'}$ as the functions $u\in Y$ such that
$$
\int_\Omega h(x,u)v\, dx<\infty
$$
for all $v\in Y_0$.

Under our assumptions on $G$, the $G-$polyharmonic operator $\P^k_G$ fits into the theory developed by Gossez in \cite{G}. In particular it is a {\em pseudo monotone} operator, that is:
\begin{theorem}[\cite{G}, Theorem 4.1]\label{pseudo}
Let $G$ satisfy the assumptions from Section \eqref{assumptions}. Then $\P^k_G$ is a pseudo-monotone operator, that is for any sequence $\{u_n\}_{n\in\N}\in D_{\P^k_G}$ such that 
\begin{itemize}
\item $u_n\to u$ in the $\sigma(W^k_0L_M(\Omega), W^{-k}E_{\bar M}(\Omega))$ topology, 
\item $\P^k_G(u_n)\to \chi$ in the $\sigma(W^{-k}L_{\bar M}(\Omega), W^k_0 E_M(\Omega))$ topology,
\item $\limsup_{n\to\infty} \langle u_n, \P^k_G(u_n)\rangle \le \langle u, \chi\rangle$,
\end{itemize}
imply that
\begin{itemize}
\item $u\in D_{\P^k_G}$,
\item $\chi = \P^k_G(u)$,
\item $\lim_{n\to\infty} \langle u_n, \P^k_G(u_n)\rangle = \langle u, \chi\rangle$.
\end{itemize}
\end{theorem}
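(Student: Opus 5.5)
The plan is to deduce the statement from Gossez's abstract result \cite[Theorem 4.1]{G}, applied with $A_\alpha(x,\xi):=G_\alpha(x,\xi)$ for $\alpha\in A_k$. Since $\P^k_G(u)=\sum_{\alpha\in A_k}(-1)^{|\alpha|}D^\alpha G_\alpha(x,\xi(u))$ has exactly the form Gossez treats, it suffices to check his structural conditions (4.2)--(4.4) for the family $(G_\alpha)_{\alpha\in A_k}$. These are: the Carath\'eodory property, a growth bound in terms of $M$ and $\bar M$, and a monotonicity condition.

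The checks go in this order.

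\textbf{Carath\'eodory property.} For a.e. $x$ the map $\xi\mapsto G_\alpha(x,\xi)$ is continuous, because $G(x,\cdot)\in C^1$. For fixed $\xi$, the map $x\mapsto G_\alpha(x,\xi)$ is measurable, because it is the pointwise limit of the difference quotients $j\bigl(G(x,\xi+j^{-1}e_\alpha)-G(x,\xi)\bigr)$. Each quotient is measurable since $G$ is Carath\'eodory.

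\textbf{Growth.} Condition \eqref{growth} is literally Gossez's growth hypothesis: $|A_\alpha(x,\xi)|\le a(x)+b\sum_\beta \bar M^{-1}(M(c\xi_\beta))$ with $a\in E_{\bar M}(\Omega)$. Together with \eqref{cota.a}--\eqref{cota.b} it also shows the following. For $u\in Y_0$ we have $G_\alpha(x,\xi(u))\in L_{\bar M}(\Omega)$, so $Y_0\subset D_{\P^k_G}$ and the pairing in \eqref{domG'} is well defined.

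\textbf{Monotonicity.} Remark \ref{monotonia derivadas} gives $\sum_\alpha (G_\alpha(x,\xi)-G_\alpha(x,\eta))(\xi_\alpha-\eta_\alpha)>0$ for $\xi\neq\eta$. Gossez's condition only asks for strict monotonicity in the top-order block $\xi^k$ with the lower-order variables frozen. This follows by taking $\xi,\eta$ that agree in all components of order $<k$. If Gossez's statement also requires a coercivity-type inequality, I would supply it from Remark \ref{coercivity.Ga}, i.e.\ \eqref{Gax}.

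If one prefers a self-contained argument instead of a citation, I would follow Gossez's proof as follows.
\begin{enumerate}
\item From $\sigma(Y,Z_0)$ convergence, the boundedness of $u_n$ in $Y$, and compact embeddings, obtain strong convergence of the lower-order derivatives $D^\alpha u_n\to D^\alpha u$ for $|\alpha|<k$ (in $E$-type norms on compact subsets, hence a.e. along a subsequence).
\item Use the $\limsup$ hypothesis together with monotonicity to show
\[
\int_\Omega \sum_\alpha \bigl(G_\alpha(x,\xi(u_n))-G_\alpha(x,\xi(u))\bigr)\bigl(D^\alpha u_n-D^\alpha u\bigr)\,dx\to 0 .
\]
\item Deduce a.e. convergence of $\nabla^k u_n$ from the strict monotonicity.
\item Identify $\chi=\P^k_G(u)$ by Vitali-type or weak-$*$ arguments in $L_{\bar M}$, and conclude with a Minty-type argument.
\end{enumerate}

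I expect the main obstacle to be step 2 in this non-reflexive, non-$\Delta_2$ setting. The pairing $\langle u,\P^k_G(u_n)\rangle$ need not make sense a priori, since $u\in Y$ but not necessarily in $Y_0$. Gossez circumvents this by approximating $u$ with functions of $Y_0$ that converge modularly: he uses truncations and mollifications, which is where the segment property enters. Since that delicate approximation is exactly the content of \cite{G}, the cleanest route is to verify (4.2)--(4.4) as above and invoke the theorem.
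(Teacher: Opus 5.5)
Your proposal follows the same route as the paper, which gives no independent argument: it simply asserts that, under the assumptions of Section~\ref{assumptions}, the operator $\P^k_G$ fits Gossez's conditions (4.2)--(4.4), so that his Theorem 4.1 applies. Your explicit checks are exactly the verification the paper leaves implicit: measurability of $G_\alpha$ via difference quotients, the growth bound \eqref{growth}, and strict monotonicity in the top-order block, obtained from strict convexity by freezing the lower-order components; the optional self-contained sketch is not needed for this.
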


Now, observe that if we restrict ourselves to the finite dimensional subspace $V_n$,
$$
\left\langle v, \P^k_G(u) \right\rangle = \int_\Omega \sum_{\alpha\in A_k} G_\alpha(x, \xi(u)) D^\alpha v\, dx \quad \text{and}\quad \left\langle v, \H'(u) \right\rangle = \int_\Omega h(x, u)v\,dx,
$$
for all $u, v\in V_n$. 

Observe that the strict monotonicity of $G_\alpha$, \eqref{eq.monotonia}, implies 
$$
\langle u, \P^k_G(u)\rangle = \int_\Omega \sum_{\alpha\in A_k} G_\alpha(x, \xi(u))D^\alpha u\, dx>0
$$
if $u\neq 0$, $u\in V_n$.

In order to apply the Ljusternik-Shnirelman theory, again as in \cite{T}, we introduce some notations.
\begin{align*}
\mathcal M_r &= \{u\in Y_0\colon \G(u)=r\},\\
\mathcal K_i(r) &= \{K\subset \mathcal M_r \text{ compact, symmetric}\colon \text{gen }K\ge i\},\\
\mathcal K_{i,n}(r) &= \{K\subset \mathcal M_r\cap V_n \text{ compact, symmetric}\colon \text{gen }K\ge i\},\\
c_i(r) &= \sup_{K\in \mathcal K_i(r)} \inf_{u\in K} \H(u),\\
c_{i,n}(r) &= \sup_{K\in \mathcal K_{i,n}(r)} \inf_{u\in K} \H(u).
\end{align*}

We can then apply the Ljusternik-Shnirelman theory on finite dimensional spaces to obtain the following lemma.
\begin{lemma}
Let $n\in \N$ be fixed. Then there exist $u_1^n,\dots, u_n^n\in V_n$ and $\lambda_1^n,\dots, \lambda_n^n>0$, such that
\begin{align*}
\G(u_i^n)=r,\quad \H(u_i^n) = c_{i,n}(r)\quad\text{and}\quad \P^k_G(u_i^n) = \lambda_i^n \H'(u_i^n) \text{ in } V_n^*,
\end{align*}
for $i=1,\dots, n$.
\end{lemma}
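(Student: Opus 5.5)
The plan is to apply the classical finite-dimensional Ljusternik--Schnirelmann theory to the restrictions of $\G$ and $\H$ to $V_n$. Since $V_n$ is finite dimensional and spanned by test functions, every $u\in V_n$ lies in $Y_0$ together with all derivatives, which are bounded. Hence on $V_n$ (with any norm, all equivalent) both $\G|_{V_n}$ and $\H|_{V_n}$ are of class $C^1$. Their derivatives are given by the formulas displayed above for $\langle v,\P^k_G(u)\rangle$ and $\langle v,\H'(u)\rangle$. This follows by dominated convergence from the $C^1$ regularity of $G(x,\cdot)$ and the Carath\'eodory and growth conditions \eqref{growth}, \eqref{b.growth}, since $\xi(u)$ ranges over a bounded set when $u$ ranges over a bounded subset of $V_n$. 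Both functionals are even, because $h$ is odd and $\mathcal M_r$ is taken symmetric; for $\G$ we use $G(x,-\xi)=G(x,\xi)$, or otherwise we work with symmetric compact sets as in \cite{T}.

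Next I check that $S:=\mathcal M_r\cap V_n$ is a compact symmetric $C^1$ hypersurface of $V_n$ homeomorphic to the unit sphere, via radial projection. By convexity and $G(x,0)=0$, the map $t\mapsto \G(tu)$ is convex with value $0$ at $t=0$. Its derivative $\langle u,\P^k_G(tu)\rangle/$ (scaled) is positive for $t>0$ by the strict monotonicity remark, so the map is strictly increasing. By $M$-coercivity \eqref{M-coercive}, $\G(tu)\ge\theta t\|\nabla^k u\|_M\to\infty$, so each ray meets $S$ exactly once. The regular-value property $\langle u,\G'(u)\rangle>0$ on $S$ makes $S$ a $C^1$ manifold and gives $\text{gen}\, S=n$ (by Borsuk--Ulam). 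Consequently $\mathcal K_{i,n}(r)\neq\emptyset$ for $i\le n$, and $c_{i,n}(r)$ is finite, since $\H$ is bounded on the compact set $S$. It is also positive, because $H(x,u)>0$ for $u\ne0$.

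Then the standard minimax argument applies: $\H|_S$ is $C^1$ and even on a compact $C^1$ manifold, so Palais--Smale holds trivially. The deformation lemma, applied on $S$ with a pseudo-gradient flow (odd, preserving $S$), shows that each $c_{i,n}(r)$, $i=1,\dots,n$, is a critical value of $\H|_S$. Thus there is $u_i^n\in S$ with $\H(u_i^n)=c_{i,n}(r)$, and a Lagrange multiplier $\mu$ with $\H'(u_i^n)=\mu\,\G'(u_i^n)$ in $V_n^*$. Testing with $u_i^n$ gives $\mu\langle u_i^n,\P^k_G(u_i^n)\rangle=\int_\Omega h(x,u_i^n)u_i^n>0$, so $\mu>0$, and we set $\lambda_i^n=1/\mu$.

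The main obstacle is the $C^1$ regularity of $\G$ on $V_n$, i.e.\ justifying differentiation under the integral without $\Delta_2$. The idea is that on $V_n$ everything is bounded: $\xi(u)\in L^\infty$, so \eqref{growth} gives an $E_{\bar M}$-dominating function (and continuity of $u\mapsto G_\alpha(x,\xi(u))$ into $L^1$ after multiplying by bounded $D^\alpha v$). A secondary point is the evenness of $\G$ needed for symmetric deformations. If $G$ is not assumed even, I would replace it by $\tfrac12(G(x,\xi)+G(x,-\xi))$, or note that the setup in \cite{T} implicitly requires it.
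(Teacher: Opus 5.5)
Your argument is the same as the paper's. The paper simply invokes the finite-dimensional Ljusternik--Schnirelmann argument of \cite[Lemma 4.1]{T}, and you spell out the checks it needs: $C^1$ regularity on $V_n$ by dominated convergence, $\mathcal M_r\cap V_n$ as an odd copy of the sphere of genus $n$, the minimax/deformation step, and $\lambda_i^n>0$ obtained by testing the multiplier identity with $u_i^n$.

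The point to correct is evenness. You are right that it is needed, but neither of your fallbacks works.
\begin{itemize}
\item Replacing $G$ by $\tfrac12(G(x,\xi)+G(x,-\xi))$ changes $\G$, $\mathcal M_r$ and $\P^k_G$. You would be proving the lemma for a different operator.
\item Restricting to symmetric compact sets does not help. If $\G$ is not even, $\mathcal M_r\cap V_n$ is not symmetric and need not contain any nonempty symmetric set. For instance, with $n=1$, suppose $t\mapsto\G(tv_1)$ reaches $r$ at $t_+>0$ and at $-t_-<0$ with $t_+\neq t_-$; this can happen for a strictly convex $G$ that is not even. Then $\mathcal K_{1,1}(r)=\emptyset$, so $c_{1,1}(r)=-\infty$ and the statement is false.
\end{itemize}
The correct conclusion is your other option: $G(x,-\xi)=G(x,\xi)$ must be added to the hypotheses of Section~\ref{assumptions}. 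It is implicit in the paper's appeal to \cite{T}, where the functional is $\int_\Omega M(|\nabla u|)\,dx$.

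Evenness is also what makes your step ``$\frac{d}{dt}\G(tu)>0$ for $t>0$ by strict monotonicity'' valid. Monotonicity only gives $\langle u,\P^k_G(tu)\rangle>\langle u,\P^k_G(0)\rangle$. You additionally need $G_\alpha(x,0)=0$, which follows once $G(x,\cdot)$ is even and $C^1$.

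Finally, you do not need \eqref{M-coercive} to get $\G(tu)\to\infty$. Convexity of $t\mapsto\G(tu)$, together with a positive derivative at some $t_0>0$, already gives at least linear growth. This route is preferable, because \eqref{M-coercive} as written cannot hold for small $u$. Applied to $tv$ with $t\to0^\pm$, it would force the one-sided derivatives of $t\mapsto\G(tv)$ at $t=0$ to have opposite strict signs. That contradicts the $C^1$ regularity on $V_n$ that you yourself established.
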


\begin{proof}
With the properties proved for our functionals, the proof follows without change from that of \cite[Lemma 4.1]{T}.
\end{proof}

Next, for any given $i\in \N$ we analyze the limiting behavior for the sequences $\{\lambda_i^n\}_{n\ge i}$ and $\{u_i^n\}_{n\ge i}$.

\begin{lemma}\label{bar u lemma}
There exist $\bar \lambda\in (0,\infty)$ and $\bar u\in W^k_0L_M(\Omega)\cap E_B(\Omega)$ such that, up to a subsequence, $\lambda_i^n\to\bar\lambda$ as $n\to\infty$ and $u_i^n\rightharpoonup \bar u$ in $\sigma(Y, Z_0)$. Moreover,
\begin{align*}
&\G(\bar u)=r,\quad \H(\bar u) = \lim_{n\to\infty} c_{i,n}(r),\quad \bar u \in D_{P^k_G}\cap D_{\H'}\quad \text{and}\quad \P^k_G(\bar u) = \bar \lambda \H'(\bar u).
\end{align*}
That is,  for every $v \in Y_0$,
$$
\int_\Omega \sum_{\alpha\in A_k} G_\alpha(x, \xi(\bar u))D^\alpha v \, dx = \bar \lambda\int_\Omega h(x, \bar u) v\, dx,
$$
\end{lemma}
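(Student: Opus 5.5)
The plan is to follow the scheme of \cite[Lemma 4.2]{T}, replacing the monotonicity of the $m$-Laplacian with the pseudo-monotonicity of $\P^k_G$ (Theorem \ref{pseudo}).

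\textbf{Step 1: bounds and weak limits.} Since $\G(u_i^n)=r$, the $M$-coercivity \eqref{M-coercive} and the Poincaré inequality \eqref{Poincare} give $\|u_i^n\|_{k,M}\le C r/\theta$. The space $Y=(Z_0)^*$ and $Z_0$ is separable, so by Banach--Alaoglu a subsequence satisfies $u_i^n\rightharpoonup\bar u$ in $\sigma(Y,Z_0)$. By the compact immersion $W^k_0E_M(\Omega)\subset E_B(\Omega)$, which extends to bounded sets of $Y$ in the weak-$*$ sense, $u_i^n\to\bar u$ strongly in $E_B$ and a.e. The growth bounds \eqref{b.growth} and \eqref{B.growth}, together with Vitali's theorem, then give $\H(u_i^n)\to\H(\bar u)$. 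Hence $\H(\bar u)=\lim c_{i,n}(r)$.

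\textbf{Step 2: bounds on $\lambda_i^n$.} The limit $\lim c_{i,n}(r)$ exists: $c_{i,n}$ is nondecreasing in $n$ and bounded above, because $\H$ is bounded on the $\G$-level set by Step 1. It is positive, since $c_{i,i}(r)>0$ and $h(x,u)u>0$. Test the equation with $v=u_i^n$:
$$
\lambda_i^n\int_\Omega h(x,u_i^n)u_i^n\,dx=\int_\Omega\sum_{\alpha} G_\alpha(x,\xi(u_i^n))D^\alpha u_i^n\,dx\ge \G(u_i^n)=r
$$
by \eqref{Gcota}. The left integral converges to $\int h(x,\bar u)\bar u>0$, which is positive because $\H(\bar u)>0$ forces $\bar u\ne0$. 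This bounds $\lambda_i^n$ below. For the upper bound, the growth condition \eqref{growth} together with Young's inequality bounds the right-hand side by a constant independent of $n$; this uses $\G(u_i^n)=r$ and convexity, e.g.\ $G_\alpha(\xi)\xi\le G(2\xi)-G(\xi)$, plus a rescaling argument so the Orlicz norms stay controlled. So up to a subsequence $\lambda_i^n\to\bar\lambda\in(0,\infty)$.

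\textbf{Step 3: passing to the limit in the equation (main obstacle).} We need $\P^k_G(u_i^n)$ to be bounded in $Z$. The growth \eqref{growth} gives $G_\alpha(x,\xi(u_i^n))$ bounded in $L_{\bar M}$, provided $\int M(c\,D^\beta u_i^n)$ is bounded. That holds only if $c\|u_i^n\|$ is small, so I would first handle this by Gossez-type estimates: test against $v$ and use $\langle v,\P^k_G(u)\rangle\le \G(u+ v)-\G(u)$, a consequence of convexity, to bound the functional $\P^k_G(u_i^n)$ on $Y_0$. Extract $\P^k_G(u_i^n)\to\chi$ in $\sigma(Z,Y_0)$. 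For fixed $v\in V_m$ and $n\ge m$ the equation holds, so $\langle v,\chi\rangle=\bar\lambda\int h(x,\bar u)v$, and by density this holds for all $v\in Y_0$. Also $h(x,u_i^n)\to h(x,\bar u)$ strongly in $L_{\bar B}$-type sense, which is what justifies these limits.

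\textbf{Step 4: identifying $\chi$ and the level $\G(\bar u)$.} For the limsup condition,
$$
\langle u_i^n,\P^k_G(u_i^n)\rangle=\lambda_i^n\int h(x,u_i^n)u_i^n\to\bar\lambda\int h(x,\bar u)\bar u=\langle\bar u,\chi\rangle.
$$
The last equality needs $\bar u$ as a test function. I would justify it by approximating $\bar u$ with $P_n$-type sequences from Theorem \ref{Tienary}(iii), or via the modular approximation of Gossez. Theorem \ref{pseudo} then gives $\bar u\in D_{\P^k_G}$ and $\chi=\P^k_G(\bar u)$, which is the equation. Weak lower semicontinuity of the convex $\G$ gives $\G(\bar u)\le r$. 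For the reverse inequality, convexity gives $\G(u_i^n)\le \G(\bar u)+\langle u_i^n-\bar u,\P^k_G(u_i^n)\rangle$, and the last term tends to $0$ by Step 4. Hence $\G(\bar u)=r$.
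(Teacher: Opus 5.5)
Your Step 1, your lower bound $\lambda_i^n\int_\Omega h(x,u_i^n)u_i^n\,dx\ge r$ (which makes $\bar\lambda>0$ explicit), and your final appeal to Theorem \ref{pseudo} match the paper. The genuine gap is the \emph{upper} bound on $\lambda_i^n$. You try to bound $\langle u_i^n,\P^k_G(u_i^n)\rangle=\int_\Omega\sum_\alpha G_\alpha(x,\xi(u_i^n))D^\alpha u_i^n\,dx$ uniformly in $n$ from $\G(u_i^n)=r$ alone. Without the $\Delta_2$-condition this bound is not available, for two reasons:
\begin{itemize}
\item The growth estimate needs $\int_\Omega M(c\,D^\beta u_i^n)\,dx$ bounded. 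A norm bound does not give this unless $c\|D^\beta u_i^n\|_M\le 1$.
\item The inequality $G_\alpha(\xi)\xi\le G(2\xi)-G(\xi)$ needs $\G(2u_i^n)$ bounded, and $\G(u_i^n)=r$ does not give that.
\end{itemize}
For an integrand like $M(|\nabla^k u|)$, the ratio of the two quantities is governed by $tm(t)/M(t)$, which is bounded exactly under \eqref{delta 2}. With $M(t)=e^{t^2}-1$, functions whose top derivatives equal $L$ on a set of measure about $r/M(L)$ have $\G\approx r$ but $\langle u,\P^k_G(u)\rangle\approx 2L^2r$. No rescaling fixes this; the bound must use the fact that $u_i^n$ solves the discrete equation.

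The paper's device, taken from Tienari, is a comparison with a fixed $\phi\in V_{n_0}$ satisfying $\int_\Omega h(x,\bar u)(\bar u-\phi)\,dx<0$. For instance, $\phi=2u_i^m$ with $m$ large works, because $\int_\Omega h(x,\bar u)\bar u\,dx>0$. For $n\ge n_0$, test with $u_i^n-\phi\in V_n$ and use \eqref{eq.monotonia}:
\[
0\le\langle u_i^n-\phi,\P^k_G(u_i^n)-\P^k_G(\phi)\rangle=\lambda_i^n\langle u_i^n-\phi,\H'(u_i^n)\rangle-\langle u_i^n-\phi,\P^k_G(\phi)\rangle .
\]
The coefficient of $\lambda_i^n$ tends to a negative number, and the last term converges since $\P^k_G(\phi)\in Z_0$. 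Hence $\limsup_n\lambda_i^n<\infty$.

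The gap propagates to Step 3. You are right that a norm bound alone does not put $G_\alpha(x,\xi(u_i^n))$ in a bounded set of $L_{\bar M}$; the paper's estimate \eqref{bound in bar M} is quick on this point. However, your remedy $\langle v,\P^k_G(u)\rangle\le\G(u+v)-\G(u)$ fails for the same reason: $\G(u_i^n+v)$ is not uniformly bounded for $v$ in a ball. Use convexity the other way round:
\[
\langle w,\P^k_G(u_i^n)\rangle\le\langle u_i^n,\P^k_G(u_i^n)\rangle+\G(w)-r
\]
for $w$ in a small ball of $Y_0$. On that ball $\G(w)\le\int_\Omega\tilde a\,dx+\tilde b\sum_\beta\int_\Omega M(c\,D^\beta w)\,dx$ is uniformly bounded. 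Applied to $\pm w$, this bounds $\P^k_G(u_i^n)$ in $Z$, but only once $\langle u_i^n,\P^k_G(u_i^n)\rangle=\lambda_i^n\langle u_i^n,\H'(u_i^n)\rangle$ is known to be bounded, that is, after the comparison argument above.

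Finally, in your last step, $\langle u_i^n-\bar u,\P^k_G(u_i^n)\rangle\to 0$ is not a consequence of Step 4 alone. The convergence $\langle\bar u,\P^k_G(u_i^n)\rangle\to\langle\bar u,\chi\rangle$ does not follow from convergence in $\sigma(Z,Y_0)$, since $\bar u$ need not belong to $Y_0$. An extra ingredient is required there, such as a.e.\ convergence of $\xi(u_i^n)$, obtained as in Gossez's proof of pseudo-monotonicity.
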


\begin{proof}
Observe that by the $M-$coercivity condition \eqref{M-coercive}, since $\{u_i^n\}_{n\ge i}\subset \mathcal M_r$, and by Poincaré inequality \eqref{Poincare}, we have that the sequence $\{u_i^n\}_{n\ge i}$ is bounded in $Y_0\subset Y$. Hence, there exists $\bar u\in Y$ such that, up to a subsequence, $u_i^n\to \bar u$ as $n\to\infty$ in the $\sigma(Y, Z_0)$ topology.

By our assumptions, the embedding $Y_0=W^k_0E_M(\Omega)\subset E_B(\Omega)$ is compact and therefore we can suppose that $u_i^n\to \bar u$ strongly in $E_B(\Omega)$ and pointwise a.e. in $\Omega$ (consequently, $u\in E_B(\Omega)$). In particular, $B(u_i^n)\to B(\bar u)$ in $L^1(\Omega)$ (by the Brezis-Lieb Lemma \cite{BL}). Combining this fact with the growth condition on $H$, \eqref{B.growth}, we obtain 
$$
\H(u_i^n)\to \H(\bar u) \text{ as } n\to\infty.
$$

Next we want to show that the sequence of eigenvalues $\{\lambda_i^n\}_{n\ge i}$ is bounded. 

Since $c_{i,n}(r)$ is nondecreasing in $n$, we easily conclude that $\H(\bar u)\neq 0$ and this implies that $H(x, \bar u)\neq 0$, therefore $\bar u\neq 0$ a.e. in $\Omega$ and so $h(x, \bar u)\neq 0$ a.e. in $\Omega$.

Using the fact that $\bigcup_{n\ge i} V_n$ is norm dense in $Y$, we can find $n_0\ge i$ and $\phi\in V_{n_0}$ such that
$$
\int_\Omega h(x, \bar u)(\bar u-\phi)\, dx <0.
$$
Then we use the monotonicity of the operator $\P^k_G$ to conclude that
$$
0\le \langle u_i^n-\phi, \P^k_G(u_i^n) - \P^k_G(\phi)\rangle = \lambda_i^n \langle u_i^n-\phi, \H' (u_i^n)\rangle - \langle u_i^n-\phi,  \P^k_G (\phi)\rangle.
$$
Therefore,
$$
\lambda_i^n \le \frac{\langle u_i^n-\phi,  \P^k_G (\phi)\rangle}{\langle u_i^n-\phi, \H' (u_i^n)\rangle}.
$$
Observe that arguing exactly as before, the growth condition on $h$, \eqref{b.growth}, implies that $\H'(u_i^n) \to \H'(\bar u)$ in the $\sigma(Z_0, Y)$ topology. Hence
$$
\limsup_{n\to\infty}\lambda_i^n \le \frac{\langle \bar u-\phi,  \P^k_G (\phi)\rangle}{\langle \bar u-\phi, \H' (\bar u)\rangle}<\infty.
$$

We can then assume that $\lambda_i^n\to \bar\lambda$ as $n\to\infty$ and so
\begin{equation}\label{limit}
\lim_{n\to\infty} \langle u_i^n, \P^k_G (u_i^n)\rangle = \lim_{n\to\infty} \lambda_i^n\langle u_i^n, \H' (u_i^n)\rangle = \bar\lambda\langle \bar u, \H'(\bar u)\rangle.
\end{equation}

We now claim that this implies that $G_\alpha(x, \xi(u_i^n))$ is bounded in $L_{\bar M}(\Omega)$. Indeed, by the growth condition \eqref{growth} and convexity of $\bar{M}$,
\begin{align}\label{bound in bar M}
\bar M(\tfrac{1}{\delta}G_\alpha(x,\xi))&\le \bar M\left(\tfrac{a(x)}{\delta} + \tfrac{b}{\delta}\sum_{\beta\in A_k} \bar M^{-1}(M(c\xi_\beta))\right)\\
&\le \frac{1}{\delta}\bar M(a(x)) + \tfrac{b}{\delta} \sum_{\beta\in A_k} M(c\xi_\beta),
\end{align}
where $\delta>0$ is given by $\frac{1}{\delta} + (\# A_k)\frac{b}{\delta}=1$.

By a Poincaré-type inequality \eqref{Poincare}, we have that
$$
\|D^\beta u_i^n\|_M\le C\|\nabla^k u_i^n\|_M \le c\qquad \text{for all }\beta\in A_k,\ n\ge i.
$$
Hence, since  $\{u_i^n\}_{n\ge i} \subset Y_0$, from \eqref{bound in bar M} it follows that $G_\alpha(x, \xi(u_i^n))$ is bounded in $L_{\bar M}(\Omega)$. Hence, we may assume that 
$$
\P^k_G(u_i^n)\to \chi \in Z \qquad \text{ in } \sigma(Z, Y_0).
$$
Let us now show that $\chi = \P^k_G(\bar u)$. To this end, let us first take $v\in Y_0$ and compute
$$
\langle v, \chi\rangle = \lim_{n\to\infty} \langle v, \P^k_G(u_i^n)\rangle = \lim_{n\to\infty} \lambda_n\langle v, \H'(u_i^n)\rangle = \lambda\int_\Omega h(x, \bar u) v\, dx.
$$
Arguing exactly as in \cite[Lemma 4.2]{T}, this equality holds for any $v\in Y$, in particular for $v=\bar u$.

Hence,
\begin{align*}
\lim_{n\to\infty} \langle u_i^n, \P^k_G(u_i^n)\rangle &= \lim_{n\to\infty} \lambda_n\langle u_i^n, \H'(u_i^n)\rangle = \lambda \int_\Omega h(x, \bar u)\bar u\, dx = \langle \bar u, \chi\rangle.
\end{align*}
So, by the pseudomonotonicity of $\P^k_G$, Theorem \ref{pseudo}, it follows that $\bar u\in D_{\P^k_G}$, $\P^k_G(\bar u) = \chi$.

Since 
\begin{align*}
\int_\Omega \sum_{\alpha\in A_k} G_\alpha(x, \xi(u_i^n))D^\alpha u_i^n\, dx &= \lambda_n \int_\Omega h(x, u_i^n)u_i^n\, dx,\\
\int_\Omega \sum_{\alpha\in A_k} G_\alpha(x, \xi(\bar u))D^\alpha \bar u\, dx &= \bar\lambda \int_\Omega h(x, \bar u)\bar u\, dx
\end{align*}
and $\lambda_n\to\bar \lambda$, the growth condition on $h$, \eqref{b.growth}, and the fact that $H(x, u_i^n)$ converges to $H(x, \bar u)$ in $L^1(\Omega)$, tells us that $\sum_{\alpha\in A_k} G_\alpha(x, \xi(u_i^n))D^\alpha u_i^n\to \sum_{\alpha\in A_k} G_\alpha(x, \xi(\bar u))D^\alpha \bar u$ in $L^1(\Omega)$. From this follows that there exists a majorant $\psi\in L^1(\Omega)$ such that
$$
G(x, u_i^n)\le \sum_{\alpha\in A_k} G_\alpha(x, \xi(u_i^n))D^\alpha u_i^n\le \psi.
$$
Hence
$$
r = \G(u_i^n)\to \G(\bar u).
$$
The proof is now complete.
\end{proof}

The following lemma concerns the convergences of the sequences $\left\lbrace c_{i, n}(r)\right\rbrace$ as $n\to \infty$ and $\left\lbrace c_i(r)\right\rbrace$ as $i\to \infty$, respectively. The proof is the same as in \cite{T} and appeals to Theorem \ref{Tienary}.

\begin{lemma}\label{lemma convergences}
For each $i\in \mathbb{N}$, $c_{i, n}(r)\to c_i(r)$ as $n\to \infty$. Moreover, $c_i(r)\to 0$ as $i\to \infty$.
\end{lemma}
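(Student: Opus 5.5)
The plan is to follow Tienari's argument in \cite{T}, with Theorem \ref{Tienary} supplying the approximating maps $P_n$. There are three parts: the easy inequality $c_{i,n}(r)\le c_i(r)$, the reverse inequality in the limit, and the decay of $c_i(r)$ in $i$.

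\textbf{Easy inequality.} Since $V_n\subset V_{n+1}\subset Y_0$, we have $\mathcal K_{i,n}(r)\subset\mathcal K_{i,n+1}(r)\subset\mathcal K_i(r)$. Hence $c_{i,n}(r)$ is nondecreasing in $n$ and bounded above by $c_i(r)$. It therefore suffices to prove $\liminf_n c_{i,n}(r)\ge c_i(r)$.

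\textbf{Reverse inequality.} Fix $\varepsilon>0$ and choose $K\in\mathcal K_i(r)$ with $\inf_K\H> c_i(r)-\varepsilon$.
\begin{enumerate}[(i)]
\item Apply $P_n$ (with $V=\bigcup_n V_n$). Then $P_n(K)$ is compact, symmetric, and lies in a finite-dimensional subspace $V_{m_n}$. Since $P_n$ is odd and continuous, $\operatorname{gen}P_n(K)\ge i$, provided $0\notin P_n(K)$.
\item Project back onto the constraint set. For $u\ne0$ in $V_{m_n}$, the map $t\mapsto \G(tu)$ is continuous, strictly increasing for $t>0$ (by convexity and $\langle u,\P^k_G(u)\rangle>0$), zero at $0$, and tends to $\infty$ by \eqref{M-coercive}. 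So there is a unique $t_n(u)>0$ with $\G(t_n(u)u)=r$. The radial map $u\mapsto t_n(u)u$ is odd and continuous, so $K_n:=\{t_n(u)u: u\in P_n(K)\}\in\mathcal K_{i,m_n}(r)$.
\item Show that $\inf_{K_n}\H\ge \inf_K \H-\varepsilon$ for $n$ large. I would argue by contradiction. Take $u_n\in K$ with $\H(t_n(P_nu_n)P_nu_n)<\inf_K\H-\varepsilon$, and by compactness of $K$ assume $u_n\to u$ in norm.
\item By (iv) of Theorem \ref{Tienary}, $\|P_nu_n\|_Y\to\|u\|_Y$. By (iii), $P_nu_n\to u$ in $\sigma(Y,Z_0)$. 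The compact embedding into $E_B$ then gives $P_nu_n\to u$ in $E_B$ (to be justified below).
\item We need $t_n\to1$. The upper bound $\limsup_n t_n\le1$ should follow from weak lower semicontinuity of $\G$ under $\sigma(Y,Z_0)$, since $\G$ is convex and lower semicontinuous, together with strict monotonicity of $t\mapsto\G(tu)$. The lower bound $\liminf_n t_n\ge1$ would use the norm control from (iv) together with a uniform estimate relating $\G$ to the norm on the compact set $K$.
\item Given $t_n\to1$, continuity of $\H$ on $E_B$ (Brezis–Lieb together with \eqref{B.growth}, as in Lemma \ref{bar u lemma}) gives $\H(t_nP_nu_n)\to\H(u)\ge\inf_K\H$. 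This contradicts the choice of $u_n$.
\end{enumerate}
Letting $\varepsilon\to0$ gives $\liminf_n c_{i,n}(r)\ge c_i(r)$.

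\textbf{Two points to justify.} The step from $\sigma(Y,Z_0)$ convergence to strong convergence in $E_B$ needs $P_nu_n$ bounded in $Y$ (which (iv) provides), followed by the compactness argument of Lemma \ref{bar u lemma}. To ensure $0\notin P_n(K)$, note that $\|P_nu\|\to\|u\|$ uniformly on the compact set $K$, and $K$ stays away from $0$ because $\G=r>0$ on $K$.

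\textbf{Main obstacle.} I expect the hardest step to be $t_n\to1$, since $\G$ is not norm-continuous without the $\Delta_2$-condition. I would first try to follow \cite[Lemma 4.3]{T} exactly, using (iv) together with the modular/norm relation \eqref{M-coercive} and the upper bound on $G$ from the Remark following \eqref{growth}.

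\textbf{Decay of $c_i(r)$.} Suppose instead that $c_i(r)\ge\delta>0$ for all $i$.
\begin{enumerate}[(i)]
\item Take a complemented closed subspace $W_{i}$ of finite codimension $i-1$ in $Y_0$, for example $W_i=\{u: \langle u,z_j\rangle=0,\ j<i\}$ with $z_j\in Z_0$ chosen so that these functionals separate and approximate the $E_B$-norm. Any $K\in\mathcal K_i(r)$ meets $W_i$ by the intersection property of the genus.
\item Then $c_i(r)\le \sup_{W_i\cap\mathcal M_r}\H$.
\item Pick $w_i\in W_i\cap\mathcal M_r$ with $\H(w_i)\ge\delta/2$. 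The sequence $(w_i)$ is bounded in $Y$ by \eqref{M-coercive} and \eqref{Poincare}.
\item Since $\bigcap_i W_i=\{0\}$ and the functionals $z_j$ are weak-$*$ total, every $\sigma(Y,Z_0)$ limit point of $(w_i)$ is $0$. Compactness into $E_B$ gives $w_i\to0$ in $E_B$, hence $\H(w_i)\to0$, a contradiction.
\end{enumerate}
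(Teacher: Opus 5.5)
Your skeleton is the one the paper intends when it defers to \cite{T}: $c_{i,n}(r)$ is monotone in $n$, you push $K$ forward by $P_n$ and project radially onto $\mathcal M_r$, and you use the genus/codimension argument for $c_i(r)\to 0$. Three parts are fine: the easy inequality, the bound $\limsup_n t_n\le 1$ (from $\sigma(Y,Z_0)$-lower semicontinuity of the convex functional $\G$ and monotonicity of $t\mapsto\G(tv)$), and the decay argument. For the decay argument it suffices that $\{z_j\}$ be dense in $Z_0$.

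The genuine gap is the step you flag, $\liminf_n t_n\ge 1$, and the route you sketch cannot close it. If $\|\cdot\|_Y$ is the standard norm, (iii)--(iv) only give $P_nu_n\to u$ in $\sigma(Y,Z_0)$ and $\|P_nu_n\|_Y\to\|u\|_Y$. $\G$ is only lower semicontinuous for this convergence. Without the $\Delta_2$-condition these two facts give no control of $\limsup_n\G(sP_nu_n)$ for $s<1$. Concentrating spikes added to $\nabla^k u$ can be invisible to the Luxemburg norm in the limit while carrying a fixed amount of $\int_\Omega M(c\,\cdot)$ at a scale $c>1$, and that scale is exactly what the growth of $G$ sees. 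Compactness of $K$ does not help, since $\bigcup_n P_n(K)$ need not be relatively norm compact. Nor can you avoid proving $t_n\to 1$: $t\mapsto\H(tu)$ is strictly increasing, so $t_n\to t_*<1$ would give $\H(t_nP_nu_n)\to\H(t_*u)<\H(u)$.

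The missing idea is to apply Theorem \ref{Tienary} with a norm adapted to the constraint. This is why that theorem is stated for an arbitrary pair of mutually dual norms. Put on $Y$ the gauge $\|u\|_{\G}:=\inf\{k>0:\G(u/k)\le r\}$ of the convex set $\{\G\le r\}$. For $v\in Y_0\setminus\{0\}$, the map $t\mapsto\G(tv)$ is continuous, strictly increasing and unbounded. Hence $\mathcal M_r=\{v\in Y_0:\|v\|_{\G}=1\}$, and your radial factor is simply $t(v)=1/\|v\|_{\G}$. Property (iv), applied with this norm, then gives $t(P_nu_n)\to 1/\|u\|_{\G}=1$ at once, and your step (vi) finishes the argument. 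For the model functional $\int_\Omega M(|\nabla u|)\,dx$ of the generalized Laplacian treated in \cite{T} this is automatic: its level set at height $r$ in $Y_0$ is the unit sphere of the Luxemburg norm of $|\nabla u|$ built from $M/r$, which is dual to the corresponding Orlicz-type norm.

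In the present generality you must verify two things. First, $\|\cdot\|_{\G}$ must be an equivalent norm, which requires $G(x,-\xi)=G(x,\xi)$. The same evenness is what makes $\mathcal M_r$ symmetric and your radial map odd, and the paper uses it only tacitly. Second, $\|\cdot\|_{\G}$ must be dual to its polar norm on $Z_0$, i.e.\ $\{u\in Y:\G(u)\le r\}$ must be the $\sigma(Y,Z_0)$-closure of its intersection with $Y_0$.
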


Now, we can precisely state the main result of the paper, namely Theorem \ref{main intro}. More precisely, we obtain the following

 \begin{theorem}\label{main theorem}
Let $\Omega\subset \mathbb{R}^n$ be open and bounded with the segment property and let $r>0$. Assume that $G$ and $H$ satisfy the assumptions from Section \ref{assumptions}. Then, there exist sequences $\left\lbrace \bar{u}_i\right\rbrace_{i=1}^\infty \subset W_0^{k}L_M(\Omega)$ and $\left\lbrace \bar{\lambda}_i\right\rbrace_{i=1}^\infty \subset (0, \infty)$ such that
\begin{equation}\label{equation bar u}
\mathcal{P}_G^k (\bar{u}_i) = \bar{\lambda}_i \H'(\bar{u}_i) \quad \text{in } W^{-k}L_{\bar{M}}(\Omega),
\end{equation}
and
\begin{equation}\label{restriction bar u}
\mathcal{G}(\bar{u}_i)=r, \quad \mathcal{H}(\bar{u}_i)= c_i(r),
\end{equation}
for all $i$. Moreover, $\bar{\lambda}_i\to \infty$ as $i\to \infty$ and $\bar{u}_i\to 0$ in the weak topology $\sigma( W_0^{k}L_M(\Omega),  W^{-k}E_{\bar{M}}(\Omega))$ as $i\to \infty$.  
\end{theorem}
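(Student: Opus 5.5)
For each fixed $i\in\N$ I would obtain $\bar u_i$ and $\bar\lambda_i$ directly from Lemma \ref{bar u lemma}. The finite-dimensional Ljusternik--Schnirelmann lemma gives $u_i^n\in V_n$ and $\lambda_i^n>0$ for $n\ge i$. Lemma \ref{bar u lemma} then yields a subsequence with $\lambda_i^n\to\bar\lambda_i\in(0,\infty)$ and $u_i^n\to\bar u_i$ in $\sigma(Y,Z_0)$, with $\G(\bar u_i)=r$ and $\H(\bar u_i)=\lim_n c_{i,n}(r)$. It also gives $\bar u_i\in D_{\P^k_G}\cap D_{\H'}$ and $\P^k_G(\bar u_i)=\bar\lambda_i\H'(\bar u_i)$ tested against all $v\in Y_0$. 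Since $Z=(Y_0)^*$, an element of $Z=W^{-k}L_{\bar M}(\Omega)$ is determined by its action on $Y_0$. Hence this identity is precisely \eqref{equation bar u} in $W^{-k}L_{\bar M}(\Omega)$. By Lemma \ref{lemma convergences}, $\lim_n c_{i,n}(r)=c_i(r)$, which gives \eqref{restriction bar u}.

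Next I would show $\bar u_i\to 0$ in $\sigma(Y,Z_0)$. By $M$-coercivity \eqref{M-coercive} and Poincaré \eqref{Poincare}, $\|\bar u_i\|_{k,M}\le Cr/\theta$. The family is therefore bounded in $Y=Z_0^*$, and since $Z_0$ is separable, every subsequence has a further weak-$*$ convergent subsequence, with limit $w$. The compact embedding into $E_B(\Omega)$ upgrades this to strong convergence in $E_B$ along that subsequence. The same Brezis--Lieb and growth argument used in Lemma \ref{bar u lemma} then gives $\H(\bar u_i)\to\H(w)$. Since $\H(\bar u_i)=c_i(r)\to0$ by Lemma \ref{lemma convergences}, we get $\int_\Omega H(x,w)\,dx=0$. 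The hypothesis $h(x,t)t>0$ makes $H(x,t)>0$ for $t\ne0$, so $w=0$ a.e. The limit is thus unique, and the whole sequence converges to $0$.

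The remaining claim is $\bar\lambda_i\to\infty$, which I expect to be the main obstacle. I would test the equation with $v=\bar u_i$; Lemma \ref{bar u lemma} (via the argument of \cite[Lemma 4.2]{T}) shows the identity holds for $v\in Y$. This gives
\[
\bar\lambda_i\int_\Omega h(x,\bar u_i)\bar u_i\,dx=\int_\Omega\sum_{\alpha\in A_k}G_\alpha(x,\xi(\bar u_i))D^\alpha\bar u_i\,dx\ge \G(\bar u_i)=r,
\]
where the inequality is \eqref{Gcota}.

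It then suffices to show $\int_\Omega h(x,\bar u_i)\bar u_i\,dx\to0$. Since $\bar u_i\to0$ strongly in $E_B(\Omega)$, with a.e. convergence along subsequences, the growth bound \eqref{b.growth} together with Young's inequality gives
\[
|h(x,u)u|\le \bar B(f)\text{-type terms}+C B(cu).
\]
This is the delicate point. The $f(x)|u|$ contribution, with $f\in L_{\bar B}$, is controlled by Hölder's inequality in Orlicz spaces, $\le 2\|f\|_{\bar B}\|u\|_B\to0$. The term $\bar B^{-1}(B(cu))|u|\lesssim B(cu)+B(u)$ tends to $0$ in $L^1$: because $u\to0$ in the norm of $E_B$, the modulars $\int_\Omega B(\kappa u)\,dx$ vanish for every fixed $\kappa$. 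Hence $\bar\lambda_i\ge r/o(1)\to\infty$. Applied along every subsequence, this argument gives the full limit.
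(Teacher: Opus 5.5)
Your proposal is correct and follows the same route as the paper. You take $(\bar u_i,\bar\lambda_i)$ from Lemma \ref{bar u lemma} and Lemma \ref{lemma convergences}, show $\bar u_i\to0$ in $\sigma(Y,Z_0)$ and strongly in $E_B(\Omega)$, and conclude $\bar\lambda_i\to\infty$ from $\bar\lambda_i\ge r/\langle \bar u_i,\H'(\bar u_i)\rangle$ (via \eqref{Gcota}). Your second step is the right justification for a point the paper handles loosely: you get boundedness from $\G(\bar u_i)=r$, use weak-$*$ compactness and the compact embedding, and identify the limit through $\H(\bar u_i)=c_i(r)\to0$ and $H(x,t)>0$ for $t\neq0$, whereas the paper writes $\G(\bar u_i)\to0$ (incompatible with $\G(\bar u_i)=r$) and infers norm convergence from coercivity alone.
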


The proof of this result follows the lines of  \cite[Theorem 4.5]{T}, but we provide it for completeness.

\begin{proof}
By Lemma \ref{bar u lemma}, for each $i\in \mathbb{N}$, there are $\bar u_i \in D_{\mathcal{P}_G^k}$ and $\bar{ \lambda}_i>0$ such that \eqref{equation bar u} and \eqref{restriction bar u} hold. 

Moreover, by Lemma \ref{lemma convergences} and the definition of $c_i(r)$, we get $\mathcal{G} (\bar{u}_i)\to 0$ as $i \to \infty$.  Hence,  the $M$-coercivity of $G$ \eqref{M-coercive} together with the Poincar\'e-type inequality \eqref{Poincare} imply that
\begin{equation}\label{conv to 0}
\|\bar u_i\|_M \to 0 \quad \text{as }i\to \infty.
\end{equation}

By \eqref{equation bar u} and appealing to \eqref{Gcota}, we get 
\begin{equation}\label{eigenvalue}
\bar{\lambda}_i= \dfrac{\left\langle \bar{u}_i, \mathcal{P}_G^k(\bar{u}_i) \right\rangle}{\left\langle \bar{u}_i, \mathcal{H}'(\bar{u}_i)\right\rangle}\geq \dfrac{\mathcal{G}(\bar{u}_i)}{\left\langle \bar{u}_i, \mathcal{H}'(\bar{u}_i)\right\rangle}= \dfrac{r}{\left\langle \bar{u}_i, \mathcal{H}'(\bar{u}_i)\right\rangle}.
\end{equation}
Finally, by the compactness of the embedding $W_0^mL_M(\Omega)\subset L_B(\Omega)$ and the fact that $\bar{u}_i\in E_B(\Omega)$, we have
 $\left\langle \bar{u}_i, \mathcal{H}'(\bar{u}_i)\right\rangle \to 0$ as $i\to \infty$, and so by \eqref{eigenvalue}, we obtain $\bar{\lambda}_i\to \infty$ as $i\to \infty$. This ends the proof of the theorem.
\end{proof}

\section*{Acknowledgments}

P. O.  was partially supported by CONICET PIP 11220210100238CO and SIIP 80020240400005UN . This work was started when the third author was visiting the Instituto de C\'alculo, at  Universidad de Buenos Aires. He would like to thank the members of the IC-UBA for their hospitality.

\end{document}